\newtheorem{theorem}{Theorem}[section]
\newtheorem{proposition}[theorem]{Proposition}
\newtheorem{corollary}[theorem]{Corollary}
\theoremstyle{definition}
\newtheorem{assumption}[theorem]{Assumption}
\newtheorem{definition}[theorem]{Definition}
\theoremstyle{remark}
\newtheorem{remark}[theorem]{Remark}
\newtheoremstyle{problem}
     {\topsep}
     {\topsep}
     {\slshape}
     {}
     {\bfseries}
     {.}
     {.5em}
     {}
\theoremstyle{problem}
\newtheorem{problem}[theorem]{Problem}
\numberwithin{equation}{section}
\newcommand{\cart}{\times}
\newcommand{\ud}{\,\mathrm{d}}
\newcommand{\CubeHomL}{Y}
\DeclareMathOperator{\Lapl}{\triangle}
\title{Homogenization at different linear scales, bounded
  martingales and the Two-Scale Shuffle limit}
\author{Kévin Santugini\thanks{
    Univ. Bordeaux, IMB, UMR 5251, F-33400 Talence, France. / 
    CNRS, IMB, UMR 5251, F-33400 Talence, France. /
    INRIA, F-33400 Talence, France. /
    \texttt{Kevin.Santugini@math.u-bordeaux1.fr}}
}
\date{\today}
\begin{document}
\maketitle
\begin{abstract}
In this paper, we consider two-scale limits
obtained with increasing homogenization periods, each period being an entire multiple of the previous
one. We establish that, up to a measure preserving rearrangement, these two-scale limits form a
martingale which is bounded: the rearranged two-scale limits
themselves converge both strongly in $\mathrm{L}^2$ and almost everywhere when the period tends to
$+\infty$. This limit, called the Two-Scale Shuffle limit, contains all the information present in all the
two-scale limits in the sequence.
\end{abstract}

\section{Introduction}
Homogenization is used to study the solutions to
equations when there are multiple scales of interest, usually a
microscopic one and a macroscopic one. In particular, one may consider the solutions
$u_\varepsilon$ to a partial differential equation with locally $\varepsilon$-periodic
coefficients and study their behavior as
the small period $\varepsilon$ tends to $0$. Two-scale convergence,
introduced by G.~Nguetseng~\cite{Nguetseng:TheoryHomogenization} and
G.~Allaire~\cite{Allaire:TwoScaleConvergence}, is
suited to study this particular subset of homogenization problems
called periodic homogenization. It was later extended to the case of
periodic surfaces by M.~Neuss Radu~\cite{NeussRadu:These,
  NeussRadu:ExtensionsTwoScaleConv}
and G.~Allaire, A.~Damlamian and
U.~Hornung~\cite{Allaire.Damlamian.Hornung:TwoScaleSurfacicConvergence}.
It can also be used in the
presence of periodic holes in the geometry, 
see~\cite{Cioranescu.Paulin:HomogenizationOpenSetHoles,Damlamian.Donato:WhichSequencesHolesHomogenization}
or to homogenize multilayers~\cite{Santugini:HomogenizationMultilayerLL,Santugini:HomogenizationMultilayerHeat}.

Intuitively, two-scale convergence introduces the concept of two-scale limit $u_0$ which is a function of both a
macroscopic variable $\bm{x}$ ---also called slow variable---  and a microscopic $p$-periodic variable
$\bm{y}$ ---also called fast variable--- such that, in some ``meaning'', $\bm{x}\mapsto
u_0(\bm{x},\bm{x}/\varepsilon)$ is a good approximation of $u_\varepsilon$.

As indicated by its name, two-scale convergence captures the behavior
at two scales:
the macroscopic one and the $p\varepsilon$-periodic one. However,
two-scale convergence does not capture all phenomena that happens at
a scale linear in $\varepsilon$ but only those whose length
scale is $p\varepsilon/m$ where $m$ is an integer. The two-scale
limit of a sequence 
depends not only on the asymptotic scale, but also on the precise value
of the chosen period. For example, any
phenomena happening at the length scale of $2\varepsilon$ will not be
fully apparent in the two-scale limit computed with period $\varepsilon$. 
The two-scale limit computed with period $2\varepsilon$ will contain
no less ---and might actually  contain more--- information than the
two-scale limit computed with period $\varepsilon$. For example, the
homogenization of $\sin(2\pi x/\varepsilon)+\sin(\pi x/\varepsilon)$ gives
a two-scale limit of $u_0:(x,y)\mapsto\sin(2\pi y)$ if computed with
the homogenization period $\varepsilon$, \textit{i.e.}, when $p=1$, and $u_0:(x,y)\mapsto\sin(2\pi
y)+\sin(\pi y)$ if computed with the 
homogenization period $2\varepsilon$, \textit{i.e.}, when $p=2$.
Furthermore, if we choose $p=1/2$, then the two-scale limit is none
other than the null function. Worse, the scale factor $p$ could be irrational.

The choice of the scale factor $p$ used in the homogenization
process is therefore of utmost importance in two-scale
convergence. Using a badly chosen scale
factor $p$ may and will often cause a huge loss of information. At worst, we recover
no more information than the one obtained by the standard weak
$\mathrm{L}^2$ limit: if
$p\varepsilon$ is the correct choice of homogenization period, the two-scale limit 
computed with period $\lambda p\varepsilon$ where $\lambda$ is an irrational number
should, intuitively, carry no information about what happens at scale
$p\varepsilon$.

Fortunately, there is usually a natural choice of period: the coefficients of
the partial differential equation are often chosen locally
$\varepsilon$-periodic. The most
natural choice is to choose $p=1$, \textit{i.e.}, to consider the
correct microscopic scale for $u_\varepsilon$ is $\varepsilon$ itself.
If there are two important periods to consider $p\varepsilon$
and $p'\varepsilon$, the intuitive solution is to choose a period that
is an entire multiple of both. However, this can only be done if
  the ratio $p/p'$
between the two scale factors
is a rational number. 

When the two-scale limit depends on the fast variable, 
we may 
consider an homogenization period of $p_2\varepsilon$ instead of
$p_1\varepsilon$ where $p_2/p_1$ is a positive integer. The two-scale limit
computed with the homogenization period $p_2\varepsilon$ contains more
information than the two-scale limit computed with the homogenization
period $p_1\varepsilon$. It is then natural to study the behavior of the
two-scale limit as the scale factor tends to
$+\infty$. G.~Allaire and C.~Conca studied in~\cite{Allaire.Conca:BlochWave}
a similar problem and  established, for an elliptic problem, the behavior of the
spectra of the equation satisfied by the two-scale limit as the scale
factor $p$ goes to $+\infty$. G.~Ben Arous and
H.~Owhadi~\cite{Arous.Owhadi:MultiscaleHomogenization}
studied the behavior of the Brownian motion in a periodic
potential using multiscale homogenization when the ratio between two
successive scales is bounded from above and below.

In this paper, we consider various two-scale limits, each computed
with a different homogenization period. In particular, we consider a sequence of
periods $(p_n)_{n\in\mathbb{N}}$ such that for all integers $n$, $p_{n+1}/p_{n}$ is a
positive integer and we study the two-scale limit of
$(u_\varepsilon)_{\varepsilon>0}$ computed with the homogenization
period $p_n\varepsilon$. This two-scale limit, denoted $u_{0,p_n}$, is $p_n$-periodic in each
component of its fast
variable. Since
$p_{n+1}$ is always an entire multiple of $p_n$, 
one can always recover the two-scale limit $u_{0,p_{n}}$ from the two
scale limit $u_{0,p_{n+1}}$.  
If $p_{n+1}=m_np_n$ and in dimension $d\geq1$:
\begin{equation*}
u_{0,p_{n}}(\bm{x},\bm{y})=\frac{1}{m_n^d}\sum_{\bm{\alpha}\in\llbracket0,m_n-1\rrbracket^d}
	u_{0,p_{n+1}}(\bm{x},\bm{y}+p_n\bm{\alpha}).
\end{equation*}

The sequence of two-scale limits $(u_{0,p_n})_{n\in\mathbb{N}}$
yields increasing information on the asymptotic behavior of $(u_\varepsilon)_{\varepsilon>0}$.
A natural question is whether
the two-scale limits $u_{0,p_n}$ themselves converge whenever 
$n$ tends to $+\infty$.  \textit{I.E.}, does there exist a function
that carry the information of all the $p_n$-two-scale limits?
The goal of our paper is to answer this question. The answer is
positive. We show in this
paper that the sequence of two-scale limits is, after a measure
preserving rearrangement,
a bounded martingale in $\mathrm{L}^2$ and therefore converges both strongly in
$\mathrm{L}^2$ and almost everywhere to a function we call the Two-Scale Shuffle limit.

In \S\ref{sect:PreviouslyKnownResults}, we remind the reader of 
previously known results: two-scale
convergence 
and the convergence properties
of bounded martingales.
In \S\ref{sect:rearrangement}, we show how 
the different two-scale limits are related to each other through
martingale-like equalities and
explain how to transform these two-scale limits to get a genuine martingale.
This leads to our stating of our main theorem:
Theorem~\ref{theo:TwoScaleLimitsMartingale}
in which we show that in a certain meaning the two-scale limits
themselves converge to the Two-Scale Shuffle limit. In addition, we
also state in Corollary~\ref{corrol:RecoveryTwoScale} that
all the information present in all the two-scale limits is contained
in the Two-Scale Shuffle limit.
In \S\ref{sect:application}, we use this result on the
heat equation in multilayers with transmission conditions between
adjacent layers and establish, for this particular example, the equation satisfied by the Two-Scale
Shuffle limit in Theorem~\ref{theo:ShuffleLimitHeatEquationMultilayers}.

\section{Notations, prerequisites and known
    results}\label{sect:PreviouslyKnownResults}
Throughout this paper, if $x$ is in $\mathbb{R}$, we denote by $\lfloor x\rfloor$ the
  integer part of $x$. We also denote by $\llbracket n_1,n_2\rrbracket$ the set $\lbrack n_1,
  n_2\rbrack\cap\mathbb{N}$. To make the present paper as 
  self-contained as possible, we recall in this section
  known results on the two main mathematical tools we use to prove our
  main theorem: two-scale convergence in
  \S\ref{subsect:TwoScaleConvergence}, and classical results on
  the convergence of bounded martingales in \S\ref{subsect:ProbabilityTheory}.

\subsection{The classical notion of two-scale convergence}\label{subsect:TwoScaleConvergence}
First, as in~\cite{Allaire:TwoScaleConvergence}, we introduce some notations. In this paper, $p$ always
refer to a scale factor. It remains constant while taking the two-scale
limit. However, the goal of this paper is to observe the behavior of
the two-scale limits as $p$ tends to $+\infty$.

By $\Omega$, we denote a bounded open domain of $\mathbb{R}^d$ where $d\geq 1$. By $\CubeHomL_p$,
we denote the cube $\lbrack0,p\rbrack^d$. 
By $\mathrm{L}^2_\#(\CubeHomL_p)$, we denote the space of measurable functions 
defined over $\mathbb{R}^d$, that are $p$-periodic in each variable
and that are square integrable over $\CubeHomL_p$.
By $\mathcal{C}_\#(\CubeHomL_p)$, we denote the
set of continuous functions defined on $\mathbb{R}^d$ that are $p$-periodic
in each variable.

We reproduce the now classical definition of two-scale convergence
found in~\cite{Allaire:TwoScaleConvergence,Nguetseng:TheoryHomogenization}. For convenience, we added the scale factor $p$.
\begin{definition}[Two-scale convergence]\label{defin:TwoScaleConvergence}
Let $p$ be a positive real. 
A sequence $(u_\varepsilon)_{\varepsilon>0}$ belonging to
$\mathrm{L}^2(\Omega)$ is said to $p$-two-scale converge
if there exists $u_{0,p}$ in $\mathrm{L}^2(\Omega\cart\CubeHomL_p)$
such that:
\begin{equation}
\lim\limits_{\varepsilon\to0}
\int_\Omega u_{\varepsilon}(\bm{x})
	\psi\left(\bm{x},\frac{\bm{x}}{\varepsilon}\right)
	\ud\bm{x}
=\frac{1}{p^d}\int_\Omega\int_{\CubeHomL_p}
	u_{0,p}(\bm{x},\bm{y})\psi\left(\bm{x},\bm{y}\right)\ud\bm{y}\ud\bm{x},
\end{equation}
for all $\psi$ in $\mathrm{L}^2(\Omega;\mathcal{C}_\#(\CubeHomL_p))$.
\end{definition}
It is a common abuse of notation to also designate by
  $u_{0,p}$ the unique extension of  $u_{0,p}$ to
  $\Omega\cart\mathbb{R}^d$ that is
$p$-periodic in the last $d$ variables.

G.~Allaire, see~\cite{Allaire:TwoScaleConvergence},  and G.Nguetseng,
see~\cite{Nguetseng:TheoryHomogenization}, proved
that any sequence of functions bounded in $\mathrm{L}^2$ has a 
subsequence that two-scale converges. Let's
reproduce this precise compactness result.
\begin{theorem}\label{theo:TwoScaleConvergenceCompactness}
Let $(u_\varepsilon)_{\varepsilon>0}$ be a sequence of functions
bounded in $\mathrm{L}^2(\Omega)$. Then, there exist
$u_{0,p}$ in $\mathrm{L}^2(\Omega\cart\rbrack0,p\lbrack^d)$
and a subsequence $\varepsilon_k$ converging to $0$ such that 
\begin{equation}\label{eq:defDoubleConv}
\lim\limits_{k\to\infty}
\int_\Omega u_{\varepsilon_k}(\bm{x})
	\psi\left(\bm{x},\frac{\bm{x}}{\varepsilon_k}\right)\ud\bm{x}
=\frac{1}{p^d}\int_\Omega\int_{\CubeHomL_p}
	u_{0,p}(\bm{x},\bm{y})\psi\left(\bm{x},\bm{y}\right)
	\ud\bm{y}\ud\bm{x},
\end{equation}
for all $\psi$ in $\mathrm{L}^2(\Omega;\mathcal{C}_\#(\CubeHomL_p))$.
\end{theorem}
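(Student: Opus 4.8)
The plan is to proceed by a standard functional-analytic compactness argument: extract the limit as a weak-$*$ limit of uniformly bounded linear functionals, then identify it with an $\mathrm{L}^2$ density via the Riesz representation theorem. For each $\varepsilon>0$, I would introduce the linear functional
\begin{equation*}
\mu_\varepsilon:\psi\longmapsto\int_\Omega u_\varepsilon(\bm{x})\,\psi\left(\bm{x},\frac{\bm{x}}{\varepsilon}\right)\ud\bm{x}
\end{equation*}
on the Banach space $\mathrm{L}^2(\Omega;\mathcal{C}_\#(\CubeHomL_p))$, equipped with the norm $\|\psi\|^2=\int_\Omega\sup_{\bm{y}}|\psi(\bm{x},\bm{y})|^2\ud\bm{x}$. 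By the Cauchy--Schwarz inequality, $|\mu_\varepsilon(\psi)|\le\|u_\varepsilon\|_{\mathrm{L}^2(\Omega)}\,\|\psi\|$, so the $\mu_\varepsilon$ are uniformly bounded by $C:=\sup_\varepsilon\|u_\varepsilon\|_{\mathrm{L}^2(\Omega)}<\infty$. Since $\mathcal{C}_\#(\CubeHomL_p)$ is separable, so is $\mathrm{L}^2(\Omega;\mathcal{C}_\#(\CubeHomL_p))$, and the unit ball of its dual is sequentially weak-$*$ compact. I would thus extract a subsequence $\varepsilon_k\to0$ along which $\mu_{\varepsilon_k}$ converges weak-$*$ to some functional $\mu$.

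The crux of the argument, and the step I expect to be the main obstacle, is the \emph{mean-value} (averaging) lemma: for every $g\in\mathrm{L}^1(\Omega;\mathcal{C}_\#(\CubeHomL_p))$,
\begin{equation*}
\lim_{\varepsilon\to0}\int_\Omega g\left(\bm{x},\frac{\bm{x}}{\varepsilon}\right)\ud\bm{x}
=\frac{1}{p^d}\int_\Omega\int_{\CubeHomL_p}g(\bm{x},\bm{y})\ud\bm{y}\ud\bm{x}.
\end{equation*}
I would establish this first for separated functions $g(\bm{x},\bm{y})=\phi(\bm{x})w(\bm{y})$ with $\phi$ smooth and $w$ a finite trigonometric polynomial, where it reduces to a Riemann--Lebesgue-type statement that the oscillating integrals of $w(\cdot/\varepsilon)$ converge to the mean of $w$ over $\CubeHomL_p$. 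Such tensor products are dense in $\mathrm{L}^1(\Omega;\mathcal{C}_\#(\CubeHomL_p))$ by Fejér's theorem combined with density of simple Bochner functions, and the uniform estimate $|\int_\Omega g(\bm{x},\bm{x}/\varepsilon)\ud\bm{x}|\le\int_\Omega\sup_{\bm{y}}|g(\bm{x},\bm{y})|\ud\bm{x}$ allows me to pass to the general case by an $\varepsilon/3$ approximation.

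With the lemma in hand I would finish by upgrading the continuity estimate on $\mu$. Applying the lemma to $g=|\psi|^2$ gives $\lim_k\|\psi(\cdot,\cdot/\varepsilon_k)\|_{\mathrm{L}^2(\Omega)}^2=\tfrac{1}{p^d}\int_\Omega\int_{\CubeHomL_p}|\psi|^2$, whence
\begin{equation*}
|\mu(\psi)|=\lim_{k\to\infty}|\mu_{\varepsilon_k}(\psi)|
\le C\,\lim_{k\to\infty}\bigl\|\psi(\cdot,\cdot/\varepsilon_k)\bigr\|_{\mathrm{L}^2(\Omega)}
=\frac{C}{p^{d/2}}\left(\int_\Omega\int_{\CubeHomL_p}|\psi|^2\ud\bm{y}\ud\bm{x}\right)^{1/2}.
\end{equation*}
Therefore $\mu$ is continuous for the $\mathrm{L}^2(\Omega\cart\CubeHomL_p)$ norm on the subspace $\mathrm{L}^2(\Omega;\mathcal{C}_\#(\CubeHomL_p))$, which is dense in $\mathrm{L}^2(\Omega\cart\CubeHomL_p)$. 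Extending $\mu$ by continuity to the whole space and invoking the Riesz representation theorem produces a unique $u_{0,p}\in\mathrm{L}^2(\Omega\cart\CubeHomL_p)$ such that $\mu(\psi)=\tfrac{1}{p^d}\int_\Omega\int_{\CubeHomL_p}u_{0,p}\psi$, which is precisely the identity~\eqref{eq:defDoubleConv} holding for all admissible $\psi$.
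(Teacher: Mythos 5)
Your argument is correct and is essentially the classical proof of Allaire (and Nguetseng) that the paper simply cites for this theorem: uniform boundedness of the functionals $\mu_\varepsilon$ on the separable space $\mathrm{L}^2(\Omega;\mathcal{C}_\#(\CubeHomL_p))$, weak-$*$ extraction, the mean-value lemma for admissible test functions applied to $|\psi|^2$ to upgrade the continuity estimate to the $\mathrm{L}^2(\Omega\cart\CubeHomL_p)$ norm, and Riesz representation. Nothing differs in substance from the proof the paper defers to, and the scale factor $p$ enters only as the normalization $1/p^d$, exactly as the paper remarks.
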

\begin{proof}
See G.~Allaire~\cite[Theorem~1.2]{Allaire:TwoScaleConvergence} and
G. Nguetseng~\cite[Theorem~2]{Nguetseng:TheoryHomogenization}.
The presence of the scale factor $p$ has no impact on the proof.
\end{proof}

We also have the classical proposition
\begin{proposition}\label{prop:TwoScaleLimitL2Bound}
Let  $u_\varepsilon$ $p$-two-scale converges to $u_{0,p}$. Then,
\begin{equation*}
\frac{1}{p^{d/2}}\lVert u_{0,p}\rVert_{\mathrm{L}^2(\Omega\cart\CubeHomL_p)}\leq 
\liminf\limits_{\varepsilon\to0}
	\lVert u_\varepsilon\rVert_{\mathrm{L}^2(\Omega)}.
\end{equation*}
\end{proposition}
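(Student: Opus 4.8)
The plan is to prove the lower-semicontinuity bound by testing two-scale convergence against a well-chosen sequence of test functions that recovers the $\mathrm{L}^2$ norm of $u_{0,p}$ in the limit.
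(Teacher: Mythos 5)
Your one-sentence plan points at the right strategy --- it is exactly the strategy of the proof the paper defers to (Allaire, Proposition~1.6): test the two-scale convergence against admissible functions $\psi$ approximating $u_{0,p}$, and recover the $\mathrm{L}^2$ norm by duality. But as written it is a statement of intent, not a proof; none of the three steps that actually carry the argument is present, and each requires a genuine observation.

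Concretely, what is missing is the following. First, for $\psi$ in $\mathrm{L}^2(\Omega;\mathcal{C}_\#(\CubeHomL_p))$, Cauchy--Schwarz gives
$\bigl\lvert\int_\Omega u_\varepsilon(\bm{x})\psi(\bm{x},\bm{x}/\varepsilon)\ud\bm{x}\bigr\rvert
\leq\lVert u_\varepsilon\rVert_{\mathrm{L}^2(\Omega)}\,\lVert\psi(\cdot,\cdot/\varepsilon)\rVert_{\mathrm{L}^2(\Omega)}$.
Second --- and this is the step your plan silently assumes --- one needs
$\lim_{\varepsilon\to0}\lVert\psi(\cdot,\cdot/\varepsilon)\rVert_{\mathrm{L}^2(\Omega)}^2
=p^{-d}\iint_{\Omega\cart\CubeHomL_p}\lvert\psi(\bm{x},\bm{y})\rvert^2\ud\bm{y}\ud\bm{x}$,
which holds because $\lvert\psi\rvert^2$ is itself an admissible (continuous-in-$\bm{y}$, periodic) test function; this is where the factor $p^{-d}$, and hence the $p^{-d/2}$ in the statement, comes from. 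Combining these with the definition of $p$-two-scale convergence yields
$p^{-d}\iint u_{0,p}\psi\leq\bigl(\liminf_\varepsilon\lVert u_\varepsilon\rVert_{\mathrm{L}^2(\Omega)}\bigr)\,p^{-d/2}\lVert\psi\rVert_{\mathrm{L}^2(\Omega\cart\CubeHomL_p)}$.
Third, you must pass from admissible $\psi$ to $\psi=u_{0,p}$: $u_{0,p}$ is only in $\mathrm{L}^2(\Omega\cart\CubeHomL_p)$, so you need the density of $\mathrm{L}^2(\Omega;\mathcal{C}_\#(\CubeHomL_p))$ (or of smooth periodic functions) in that space, and then take a supremum over $\psi$ of norm one. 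Supply these three steps and your proof coincides with the cited one; without them the proposal cannot be credited as a proof.
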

\begin{proof}
See G.~Allaire~\cite[Proposition~1.6]{Allaire:TwoScaleConvergence}. The presence of the scale factor $p$ has no impact on the proof.
\end{proof}

The next proposition is easy to derive from Theorem~\ref{theo:TwoScaleConvergenceCompactness}.
\begin{proposition}\label{prop:TwoScaleConvergenceMultipleScaleFactors}
Let $(p_n)_{n\in\mathbb{N}}$ be an increasing sequence of positive real numbers.
Let $(u_\varepsilon)_{\varepsilon>0}$ be a sequence of functions
bounded in $\mathrm{L}^2(\Omega)$. Then, there exist
a subsequence $(\varepsilon_k)_{k\in\mathbb{N}}$ converging to $0$, and a 
sequence of functions $u_{0,p_n}$ in
  $\mathrm{L}^2(\Omega\cart\rbrack0,p_n\lbrack^d)$
such that, for any non-negative  integer $n$, 
the sequence $(u_{\varepsilon_k})_{k\in\mathbb{N}}$ $p_n$-two-scale converges
to $u_{0,p_n}$.
\textit{I.E.}, such that for all integers $n$:
\begin{equation*}
\lim\limits_{k\to\infty}
\int_\Omega u_{\varepsilon_k}(\bm{x})
	\psi\left(\bm{x},\frac{\bm{x}}{\varepsilon_k}\right)\ud\bm{x}
=\frac{1}{p_n^d}\int_\Omega\int_{\CubeHomL_{p_n}}
	u_{0,p_n}(\bm{x},\bm{y})\psi\left(\bm{x},\bm{y}\right)
	\ud\bm{y}\ud\bm{x},
\end{equation*}
for all $\psi$ in $\mathrm{L}^2(\Omega;\mathcal{C}_\#(\CubeHomL_{p_n}))$.
\end{proposition}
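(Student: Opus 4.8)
The plan is to combine Theorem~\ref{theo:TwoScaleConvergenceCompactness} with a Cantor diagonal extraction. The only new ingredient needed is the elementary observation that $p$-two-scale convergence is inherited by subsequences: if $(u_{\varepsilon_k})_k$ $p$-two-scale converges to $u_{0,p}$, then so does every subsequence, since for each fixed test function $\psi$ the defining relation \eqref{eq:defDoubleConv} is merely the convergence of a sequence of real numbers, and every subsequence of a convergent real sequence converges to the same limit. I will also arrange each extraction so that the extracted sequence is strictly decreasing and tends to $0$, so that the final diagonal is a genuine subsequence.

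First I would build a nested family of subsequences, one per period. Applying Theorem~\ref{theo:TwoScaleConvergenceCompactness} with $p=p_0$ to the bounded sequence $(u_\varepsilon)_{\varepsilon>0}$ yields a sequence $(\varepsilon^{(0)}_k)_k$ tending to $0$ and a limit $u_{0,p_0}$ such that $(u_{\varepsilon^{(0)}_k})_k$ $p_0$-two-scale converges to $u_{0,p_0}$. This sequence is still bounded in $\mathrm{L}^2(\Omega)$, so Theorem~\ref{theo:TwoScaleConvergenceCompactness} applies again with $p=p_1$, producing a subsequence $(\varepsilon^{(1)}_k)_k$ of $(\varepsilon^{(0)}_k)_k$ and a limit $u_{0,p_1}$; by the inheritance remark this same subsequence still $p_0$-two-scale converges to $u_{0,p_0}$. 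Iterating, for every $n$ I obtain a subsequence $(\varepsilon^{(n)}_k)_k$ of $(\varepsilon^{(n-1)}_k)_k$ and a limit $u_{0,p_n}$ in $\mathrm{L}^2(\Omega\cart\rbrack0,p_n\lbrack^d)$ such that $(u_{\varepsilon^{(n)}_k})_k$ $p_j$-two-scale converges to $u_{0,p_j}$ for every $j\leq n$.

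Then I would set $\varepsilon_k=\varepsilon^{(k)}_k$ and verify the diagonal works. Fix an integer $n$. For every $k\geq n$, the level-$k$ sequence is a subsequence of the level-$n$ sequence, so the diagonal term $\varepsilon_k=\varepsilon^{(k)}_k$ is one of the terms of $(\varepsilon^{(n)}_j)_j$; since the extractions are kept strictly decreasing, $(\varepsilon_k)_{k\geq n}$ is itself a subsequence of $(\varepsilon^{(n)}_j)_j$. Discarding the first $n$ terms does not affect two-scale convergence, so by the inheritance remark $(u_{\varepsilon_k})_k$ $p_n$-two-scale converges to $u_{0,p_n}$. As $n$ is arbitrary, the single subsequence $(\varepsilon_k)_k$ realizes simultaneous $p_n$-two-scale convergence for all $n$, which is the claim.

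The argument is essentially routine, and I do not expect a serious obstacle; the only point that warrants care is the legitimacy of re-applying Theorem~\ref{theo:TwoScaleConvergenceCompactness} at each stage. This is justified because its sole hypothesis is $\mathrm{L}^2(\Omega)$-boundedness, which is preserved under passage to subsequences, and because the extraction procedure underlying the theorem is indifferent to whether the index set is the continuum $\varepsilon>0$ or a countable sequence accumulating at $0$.
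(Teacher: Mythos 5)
Your proposal is correct and follows exactly the route the paper takes: its proof is precisely ``apply Theorem~\ref{theo:TwoScaleConvergenceCompactness} multiple times and proceed via diagonal extraction.'' You merely spell out the standard details (inheritance of two-scale limits by subsequences and the tail argument for the diagonal), which the paper leaves implicit.
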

\begin{proof}
Apply Theorem~\ref{theo:TwoScaleConvergenceCompactness}
multiple times and proceed via diagonal extraction.
\end{proof}

Our goal in this paper is to study the limit of $u_{0,p_n}$ as $p_n$
tends to $+\infty$. 

\subsection{Convergence of bounded martingales}\label{subsect:ProbabilityTheory}

In this section, we recall the
notions of probability theory needed to prove our main theorem.
In particular, we are interested in using the convergence properties 
of bounded martingales. For more details, the reader may 
consult~\cite{Kallenberg:FoundationsProbability}.
We assume the reader to be familiar with the notions of $\sigma$-field
and $\sigma$-additivity in measure theory.

We use the following common notations:
\begin{itemize}
\item If $C$ is a subset of $\mathcal{P}(X)$,  we denote by $\sigma(C)$
the smallest $\sigma$-field in $X$ that contains $C$.
\item If $D$ is a topological space, we denote by $\mathcal{B}(D)$
  the set of all Borel sets in $D$, \textit{i.e.}, the smallest $\sigma$-field
  containing all the open subsets of $D$.
\end{itemize}

\begin{definition}[Measurable space]
A pair $(X, \mathcal{F})$ is said to be a
measurable space if $\mathcal{F}$ is a $\sigma$-field in $X$.
\end{definition}

\begin{definition}[Measure space]
A triplet $(X, \mathcal{F},\mu)$ is said to be a
measure space if $(X, \mathcal{F})$ is a measurable space and if
$\mu$ is a positive $\sigma$-additive measure on $(X,
\mathcal{F})$.
\end{definition}

A measure space $(X, \mathcal{F}, \mu)$ is said to be finite if
$\mu(X)<+\infty$.
A measure space $(X, \mathcal{F}, \mu)$ is said to be $\sigma$-finite if
$X$ is the countable union of $\mathcal{F}$-measurable sets 
of finite measure.
A measure space $(X, \mathcal{F}, \mathbb{P})$ is said to be a probability space
if $\mathbb{P}(X)=1$.

We start by recalling the definition of
conditional expectation,
see~\cite[ch.~6, Theorem~6.1]{Kallenberg:FoundationsProbability} for more
details. Usually, the conditional 
expectation is defined for probability
spaces. The definition extends without problem to finite measure spaces
and even, to some extent, to $\sigma$-finite measure spaces.
\begin{definition}[Conditional expectation]\label{defin:ConditionalExpectation}
Let $(X,\mathcal{F},\mu)$ be a measure  space with $\mu$ being
positive and  $\sigma$-additive. Let $\mathcal{G}$ be a $\sigma$-field
such that 
$\mathcal{G}\subset\mathcal{F}$ and $(X,\mathcal{G},\mu)$ is
also $\sigma$-finite. Let $f:X\to\mathbb{R}$ be
$\mathcal{F}$-measurable and in $L^1_{\mathrm{loc}}(X,\mu)$. 
The conditional expectation of $f$ with respect to the $\sigma$-field
$\mathcal{G}$ is denoted by $\mathbb{E}(f\vert\mathcal{F})$, and
is defined as the unique, up to a modification on a set of null measure,
$\mathcal{G}$-measurable function $g$ such that
\begin{equation*}
\int_Bg(\omega)\ud\omega=\int_Bf(\omega)\ud\omega,
\end{equation*}
for all $B$ in $\mathcal{G}$.
\end{definition}
The existence of the conditional expectation is given by
Radon-Nikodym theorem. The measure $\mu$ need not be a probability
measure. However, to apply Radon-Nykodim theorem,
$(X,\mathcal{G},\mu)$ needs to be $\sigma$-finite, hence the
restriction in the definition. A statement and a proof of the
Radon-Nikodym theorem can be found 
in~\cite[Theorem~6.10]{Rudin:1987:RealAndComplexAnalysis3rdEd}.

It is not enough that $(X,\mathcal{F},\mu)$ be $\sigma$-finite in Definition~\ref{defin:ConditionalExpectation}.
\begin{remark}\label{rem:TrapSigmaFiniteMeasure}
When $\mathcal{G}\subset\mathcal{F}$, it does not follow from $(X,\mathcal{F},\mu)$ being $\sigma$-finite 
that $(X,\mathcal{G},\mu)$ is also $\sigma$-finite.
A counter-example is easily
obtained by setting $\mathcal{G}:=\{\emptyset,X\}$
whenever $\mu(X)=+\infty$. 
\end{remark}
In our main theorem, we restrict ourselves to the case of finite
measures. However, Remark~\ref{rem:TrapSigmaFiniteMeasure} will explain why the martingale
approach doesn't quite work for the most natural attempt to define 
a convergence for two-scale limits, see \S\ref{subsect:MartingaleEquality}.

In order to define martingales, we 
remind the reader of the definition of filtration.
We limit ourselves to filtrations indexed by the set $\mathbb{N}$.
See~\cite[ch.~7, p.~120]{Kallenberg:FoundationsProbability} for more details.
\begin{definition}[Filtrations]
Let $(X,\mathcal{F})$ be a measurable space. A sequence $(\mathcal{F}_n)_{n\in\mathbb{N}}$
of $\sigma$-fields, $\mathcal{F}_n\subset \mathcal{F}$ is a filtration if,
for all non-negative integers $n$, 
$\mathcal{F}_n$ is a subset of $\mathcal{F}_{n+1}$.
\end{definition}

We now recall the definition of martingales.
\begin{definition}[Martingales]\label{defin:martingales}
Let $(X,\mathcal{F},\mu)$ be a $\sigma$-finite measure space.
Let $(\mathcal{F}_n)_{n\in\mathbb{N}}$ be a filtration on 
$(X,\mathcal{F},\mu)$ such that $(X,\mathcal{F}_0,\mu)$ is $\sigma$-finite.

A sequence $(f_n)_{n\in\mathbb{N}}$ is said to be a
$(\mathcal{F}_n)_{n\in\mathbb{N}}$-martingale, if for all non-negative
integers $n$ and $j$,
\begin{equation*}
f_{n}=\mathbb{E}(f_{n+j}\vert\mathcal{F}_n).
\end{equation*}
\textit{I.E.}, if $f_n$ is $\mathcal{F}_n$-measurable and 
 if for all $F$ in $\mathcal{F}_n$:
\begin{equation}\label{eq:DefMartingaleEquality}
\int_Ff_{n}(\omega)\ud\omega=
\int_Ff_{n+j}(\omega)\ud\omega
\end{equation}
\end{definition}

We now reproduce the convergence results of bounded martingales:
\begin{theorem}[Convergence of bounded martingales]\label{theo:ConvergenceBoundedMartingales}
Let $(X,\mathcal{F},\mu)$ be a measure space with finite measure. 
Let $(\mathcal{F}_n)_{n\in\mathbb{N}}$ be a filtration on the
measurable space $(X,\mathcal{F})$. Let $q$ be in $\rbrack1,+\infty\lbrack$.
Let $(f_n)_{n\in\mathbb{N}}$ be a
$(\mathcal{F}_n)_{n\in\mathbb{N}}$-martingale such that the sequence
$(f_n) _{n\in\mathbb{N}}$ is bounded
in $L^q(X)$. Then, the sequence $(f_n)_{n\in\mathbb{N}}$ converges both
almost everywhere and strongly in $L^q(X,\mathbb{P})$.
\end{theorem}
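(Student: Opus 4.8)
The plan is to reduce the convergence of a bounded $L^q$-martingale on a finite measure space to the classical probabilistic statement on a probability space, which is standard and can be cited from Kallenberg. Since the measure is finite, I would first treat the trivial degenerate case $\mu(X)=0$ separately (where everything is vacuous), and otherwise normalize: set $\mathbb{P}:=\mu/\mu(X)$, so that $(X,\mathcal{F},\mathbb{P})$ is a probability space. Rescaling the measure by the positive constant $\mu(X)$ leaves every integral equality defining conditional expectation unchanged up to that common factor, so the martingale equality \eqref{eq:DefMartingaleEquality} holds verbatim with respect to $\mathbb{P}$: the sequence $(f_n)_{n\in\mathbb{N}}$ is an $(\mathcal{F}_n)_{n\in\mathbb{N}}$-martingale on the probability space as well. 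Likewise, since $\lVert f_n\rVert_{L^q(X,\mathbb{P})}=\mu(X)^{-1/q}\lVert f_n\rVert_{L^q(X,\mu)}$, the uniform $L^q$ bound transfers: the martingale is bounded in $L^q(X,\mathbb{P})$.

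Having landed in the probabilistic setting, I would invoke the classical convergence theorem for $L^q$-bounded martingales with $q>1$. For $q>1$ the sequence $(f_n)$ is bounded in $L^q$, hence uniformly integrable (a bounded set in $L^q$ on a finite measure space is uniformly integrable when $q>1$), so Doob's martingale convergence theorem gives an almost-sure limit $f_\infty$, and the $L^q$ boundedness upgrades this to convergence in $L^q(X,\mathbb{P})$ via Doob's $L^q$ maximal inequality together with Vitali's convergence theorem (or directly from the $L^q$ martingale convergence theorem). This is exactly the content of the results collected in~\cite[ch.~7]{Kallenberg:FoundationsProbability}, so I would state it as a citation rather than reprove it.

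Finally I would translate the conclusion back to $\mu$. Almost-everywhere convergence with respect to $\mathbb{P}$ is identical to almost-everywhere convergence with respect to $\mu$, since $\mathbb{P}$ and $\mu$ have the same null sets (they differ by a positive multiplicative constant). And strong convergence in $L^q(X,\mathbb{P})$ is equivalent to strong convergence in $L^q(X,\mu)$ because the two norms differ only by the constant factor $\mu(X)^{1/q}$. Hence $(f_n)_{n\in\mathbb{N}}$ converges both almost everywhere and strongly in $L^q(X,\mu)$, as claimed.

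I do not expect a serious obstacle here: the statement is essentially the finite-measure reformulation of a textbook probabilistic result, and the only real work is the bookkeeping of the normalization constant $\mu(X)$ and verifying that neither the martingale property nor the $L^q$ bound is disturbed by rescaling. The one point demanding a word of care is the passage from $L^q$ boundedness to the hypotheses of the classical theorem, namely the observation that for $q>1$ an $L^q$-bounded family on a finite measure space is automatically uniformly integrable; this is what makes the case $q>1$ clean and is the reason the theorem is stated for $q\in\rbrack1,+\infty\lbrack$ rather than $q=1$, where mere $L^1$ boundedness would not suffice.
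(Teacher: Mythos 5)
Your proposal is correct and follows essentially the same route as the paper: normalize $\mu$ by $\mu(X)$ to reduce to a probability space, invoke the classical martingale convergence results of~\cite[ch.~7]{Kallenberg:FoundationsProbability} for the almost-everywhere and strong $L^q$ conclusions, and observe that the rescaling disturbs neither the martingale property nor the convergence statements. The extra detail you supply on uniform integrability and Doob's inequality is sound but not needed beyond the citation.
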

\begin{proof}
See~\cite[Corollary~7.22] {Kallenberg:FoundationsProbability} for the
strong $L^q$ convergence. The almost everywhere convergence is stated 
in~\cite[Theorem~7.18] {Kallenberg:FoundationsProbability}
and holds even for $q=1$. While these two results are stated for
probability measures, the finite measures case is easily deduced from 
the probability measure case by considering the probability measure 
$\mu(\cdot)/\mu(X)$.
\end{proof}

The above theorem extends, at least partially, to $\sigma$-finite measures:
\begin{remark}
In Theorem~\ref{theo:ConvergenceBoundedMartingales}, if the
probability space
$(X,\mathcal{F},\mathbb{P})$ is replaced with $\sigma$-finite
measure space $(X,\mathcal{F},\mu)$  such that $(X,\mathcal{F}_0,\mu)$
is also $\sigma$-finite,  then the bounded martingales
converge almost everywhere and at least in $L^q_{loc}$. It is
unknown to the author if the strong $L^q$ convergence can be generalized to
the $\sigma$-finite  case.
\end{remark}

\section{Two-scale limits and bounded martingales}\label{sect:rearrangement}
In this section, we always assume both of the following
  assumptions are satisfied:
\begin{assumption}[Integer scale ratios]\label{assum:IntergerScaleFactors}
We are given a real sequence $(p_n)_{n\in\mathbb{N}}$, such that for
all $n$ in $\mathbb{N}$, $p_n>0$  and $p_{n+1}$ is an entire
multiple of $p_n$. 
Moreover, we set for $n\geq1$, $m_n:=p_{n}/p_{n-1}\in\mathbb{N}$, and for $n\geq0$,
$M_n:=p_n/p_0\in\mathbb{N}$. 
\end{assumption}
\begin{assumption}\label{assum:BoundedL2Sequence}
We are given a sequence of functions
$(u_\varepsilon)_{\varepsilon>0}$ bounded in $\mathrm{L}^2(\Omega)$ and 
 a decreasing sequence of positive
$(\varepsilon_k)_{k\in\mathbb{N}}$ such that the sequence
$(u_{\varepsilon_k})_{k\in\mathbb{N}}$ $p_n$-two-scale converges for
all integers $n$ to a function $u_{0,p_n}$ that belongs to $\mathrm{L}^2(\Omega\cart\rbrack0,p_n\lbrack^d)$. 
\end{assumption}
This last assumption is justified by
Proposition~\ref{prop:TwoScaleConvergenceMultipleScaleFactors}.


Our goal is to study the convergence of the two-scale limits $u_{0,p_n}$
when $n$ goes to infinity. In this section, we proceed as follows:
we begin by establishing a useful equality 
that looks like a martingale equality
in~\S\ref{subsect:MartingaleEquality}, then we propose a rearrangement
of the two-scale limits in~\S\ref{subsect:RearrangementSequence}, and
finally propose another rearrangement of the two-scale limits 
in~\S\ref{subsect:RearrangementContinuous}, the shuffle, which transform the sequence of
two-scale limits into a bounded martingale.

\subsection{An almost martingale equality}\label{subsect:MartingaleEquality}
We start with a simple but essential proposition.
\begin{proposition}\label{prop:MartingaleHomog}
  Suppose both assumptions~\ref{assum:IntergerScaleFactors} 
  and~\ref{assum:BoundedL2Sequence} are satisfied.
  Then, for all $j$ in $\mathbb{N}$, all
  $n$ in $\mathbb{N}$, almost all $\bm{x}$ in $\Omega$ and almost all $\bm{y}$
  in $\CubeHomL_p$:
  \begin{equation}\label{eq:MartingaleHomog}
    u_{0,p_{n}} (\bm{x},\bm{y})=
    \left(\frac{p_n}{p_{n+j}}\right)^d\sum_{\bm{\alpha}\in\llbracket0,p_{n+j}/p_n-1\rrbracket^d}
    u_{0,p_{n+j}}(\bm{x},\bm{y}+\bm{\alpha} p_n).
  \end{equation}
\end{proposition}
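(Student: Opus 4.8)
The plan is to exploit the fact that two-scale convergence at two admissible scales is tested against overlapping classes of functions. The key observation is that, since $p_{n+j}$ is an entire multiple of $p_n$, every function that is $p_n$-periodic in its fast variable is automatically $p_{n+j}$-periodic in that variable; hence any $\psi$ in $\mathrm{L}^2(\Omega;\mathcal{C}_\#(\CubeHomL_{p_n}))$ also belongs to $\mathrm{L}^2(\Omega;\mathcal{C}_\#(\CubeHomL_{p_{n+j}}))$. I would therefore fix such a $\psi$ and write the two-scale convergence of Assumption~\ref{assum:BoundedL2Sequence} at both scales $p_n$ and $p_{n+j}$. Since the quantity $\int_\Omega u_{\varepsilon_k}(\bm{x})\psi(\bm{x},\bm{x}/\varepsilon_k)\ud\bm{x}$ on the left is identical in the two statements, their limits force the two right-hand sides to coincide:
\begin{equation*}
\frac{1}{p_n^d}\int_\Omega\int_{\CubeHomL_{p_n}} u_{0,p_n}\psi\ud\bm{y}\ud\bm{x}
=\frac{1}{p_{n+j}^d}\int_\Omega\int_{\CubeHomL_{p_{n+j}}} u_{0,p_{n+j}}\psi\ud\bm{y}\ud\bm{x}.
\end{equation*}

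Next I would bring the right-hand integral back onto the smaller cube. Up to a null set, $\CubeHomL_{p_{n+j}}=\lbrack0,p_{n+j}\rbrack^d$ decomposes into the $(p_{n+j}/p_n)^d$ translated subcubes $\bm{\alpha}p_n+\CubeHomL_{p_n}$ indexed by $\bm{\alpha}\in\llbracket0,p_{n+j}/p_n-1\rrbracket^d$. Splitting the $\bm{y}$-integral accordingly and applying in each piece the change of variables $\bm{y}=\bm{z}+\bm{\alpha}p_n$, I would use the $p_n$-periodicity of $\psi$, namely $\psi(\bm{x},\bm{z}+\bm{\alpha}p_n)=\psi(\bm{x},\bm{z})$, to pull $\psi$ out of the sum. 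The right-hand side becomes $p_{n+j}^{-d}\int_\Omega\int_{\CubeHomL_{p_n}}\bigl(\sum_{\bm{\alpha}}u_{0,p_{n+j}}(\bm{x},\bm{z}+\bm{\alpha}p_n)\bigr)\psi(\bm{x},\bm{z})\ud\bm{z}\ud\bm{x}$, an integral over the same domain $\Omega\cart\CubeHomL_{p_n}$ as the left-hand side.

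Comparing the two sides, I conclude that the function $p_n^{-d}u_{0,p_n}-p_{n+j}^{-d}\sum_{\bm{\alpha}}u_{0,p_{n+j}}(\cdot,\cdot+\bm{\alpha}p_n)$ has vanishing integral against every $\psi$ in $\mathrm{L}^2(\Omega;\mathcal{C}_\#(\CubeHomL_{p_n}))$ over $\Omega\cart\CubeHomL_{p_n}$. Invoking the density of this space of test functions in $\mathrm{L}^2(\Omega\cart\CubeHomL_{p_n})$ together with the fundamental lemma of the calculus of variations, this difference vanishes almost everywhere; multiplying through by $p_n^d$ gives precisely~\eqref{eq:MartingaleHomog}. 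I expect the only genuine point of care to be this final density argument: I must first verify that the folded sum lies in $\mathrm{L}^2(\Omega\cart\CubeHomL_{p_n})$---which it does, being a finite sum of translates of the square-integrable $u_{0,p_{n+j}}$---so that annihilation against a dense subspace indeed forces the pointwise identity. The cube decomposition and the periodicity bookkeeping are routine by comparison.
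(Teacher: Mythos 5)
Your proposal is correct and follows essentially the same route as the paper: test against a function that is $p_n$-periodic (hence also $p_{n+j}$-periodic) in the fast variable, equate the two two-scale limits of the same sequence of integrals, fold the large cube $\CubeHomL_{p_{n+j}}$ onto $\CubeHomL_{p_n}$ using the periodicity of the test function, and conclude by density. The paper's proof uses smooth $p_n$-periodic test functions and leaves the final density step implicit, whereas you spell it out; this is a presentational difference only.
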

\begin{proof}
Let $\phi$ belong to
$\mathcal{C}^\infty(\overline{\Omega}\cart\mathbb{R}^d)$ be
$p_n$-periodic in the last $d$ variables. Since $p_{n+j}/p_n$ is an integer,
$\phi$ is also $p_{n+j}$-periodic in the last $d$ variables.
We take the limit of
$\int_{\Omega}u_\varepsilon(\bm{x})\phi(\bm{x},\bm{x}/\varepsilon)\ud\bm{x}$, 
as $\varepsilon$ tends to $0$, in the sense of two-scale convergence
for both scale factors $p_{n+j}$ and $p_n$:
\begin{multline*}
 \frac{1}{p_n^d}
\int_\Omega\int_{\CubeHomL_{p_n}}u_{0,p_n}(\bm{x},\bm{y})\phi(\bm{x},\bm{y})\ud\bm{y}\ud\bm{x}
 =\\=
 \frac{1}{p_{n+j}^d}
 \int_\Omega\int_{\CubeHomL_{p_{n+j}}}
u_{0,{p_{n+j}}}(\bm{x},\bm{y})
\phi(\bm{x},\bm{y})\ud\bm{y}\ud\bm{x},
 =\\=
 \frac{1}{p_{n+j}^d}
 \int_\Omega\int_{\CubeHomL_{p_n}}
 \left(\vphantom{\sum}\smash{\sum_{\bm{\alpha}\in(\llbracket0,p_{n+j}/p_n-1\rrbracket^d}}u_{0,p_{n+j}}(\bm{x},\bm{y}+\bm{\alpha}p)\right)
 \phi(\bm{x},\bm{y})\ud\bm{y}\ud\bm{x}.\qedhere
\end{multline*}
\end{proof} 

The most natural approach is to consider the $u_{0,p_n}$ as
functions defined over $\Omega\cart\mathbb{R}^d$ and to study their
convergence in some meaning in $\Omega\cart\mathbb{R}^d$.  
Such a convergence result would be ideal as the
intuitive meaning of the limit would be easy to grasp. 
Equality~\eqref{eq:MartingaleHomog} is similar to the martingale
defining equality~\eqref{eq:DefMartingaleEquality}. Would it be
possible to use the classical convergence properties
of martingales, see Theorem~\ref{theo:ConvergenceBoundedMartingales},
to prove the existence of a limit to the $u_{0,p_n}$? Unfortunately,
the martingale approach doesn't work in this setting but 
the attempt is, nevertheless, instructive. First, we try to construct
a filtration $(\mathcal{F}_n)_{n\in\mathbb{N}}$ for the $u_{0,p_n}$. For all positive integer
$n$, the $u_{0,p_n}$ are $p_n$-periodic with respect to the last $d$
variables. Let $\mathcal{F}_n$ be the set  of all Borel subsets of
$\Omega\cart\mathbb{R}^d$ that are invariant by translation of $\pm
p_n$ along any of the last $d$ directions of
$\Omega\cart\mathbb{R}^d$. 
Clearly, $u_{0,p_n}$ is $\mathcal{F}_n$-measurable. However, the
measure space $(\Omega\cart\mathbb{R}^d,\mathcal{F}_n,\mu)$ where
$\mu$ is the Lebesgue measure is not $\sigma$-finite: any
$\mathcal{F}_n$ measurable subset of $\Omega\cart\mathbb{R}^d$ is
either of null measure or of infinite measure. Therefore, the concept
of $(\mathcal{F}_n)_{n\in\mathbb{N}}$-martingale is ill-defined, see
Definition~\ref{defin:martingales}
and Remark~\ref{rem:TrapSigmaFiniteMeasure}. Should we attempt to
verify whether the martingale defining
equality~\eqref{eq:DefMartingaleEquality}  hold, we would get
either $+\infty$ or $0$ on both sides of the equation.


However, the martingale defining equality~\eqref{eq:DefMartingaleEquality} is
satisfied if one replaces the Lebesgue integral of $\mathbb{R}^d$ by the limit of the
mean over a ball as its radius tends to $+\infty$. \textit{I.E.}, we have for all $F$ in $\mathcal{F}_n$
\begin{multline*}
\lim_{R\to+\infty}\frac{1}{\lvert
 B(\bm{0},R)\rvert}\int_\Omega\int_{B(\bm{0},R)}
\mathds{1}\{(\bm{x},\bm{y})\in F\}u_{0,p_n}(\bm{x},\bm{y})\ud\bm{y}\ud\bm{x}
=\\=
\lim_{R\to+\infty}\frac{1}{\lvert
 B(\bm{0},R)\rvert}\int_\Omega\int_{B(\bm{0},R)}
\mathds{1}\{(\bm{x},\bm{y})\in F\}u_{0,p_{n+j}}(\bm{x},\bm{y})\ud\bm{y}\ud\bm{x},
\end{multline*}
where $B(\bm{0},R)$ is the open ball of $\mathbb{R}^{d}$ centered on $0$
and of radius $R$ and where $\lvert A\rvert$ is the Lebesgue measure of set $A$.
Unfortunately, we were unable to derive a direct convergence result
using this pseudo-martingale equality. To proceed further, we need to
transform the two-scale limits $u_{0,p_n}$ in order to get genuine martingales. 

\subsection{Rearrangement of the two-scale
   limits with integers}\label{subsect:RearrangementSequence}
In the previous section, we established a ``martingale-like'' equality 
for the two-scale limits $u_{0,p_n}$. To get genuine martingales in
the sense of Definition~\ref{defin:martingales}, we need to rearrange
the $u_{0,p_n}$. While we are unable to prove a convergence 
for the rearrangement of the two-scale limits presented in this
section, the ideas behind this rearrangement
provide insight on the next section where we introduce another
rearrangement and prove its convergence.

In this section, we rearrange the $u_{0,p_n}$ by
introducing a new variable $\bm{\alpha}$ that belongs to
$\mathbb{Z}^d$. The rearrangement, denoted by $v_{p_n}$, depends on the
slow variable $\bm{x}\in \Omega$, on a fast variable
$\bm{y}\in\lbrack0,p_0\lbrack^d$,
and on the new variable $\bm{\alpha}$.  To rearrange the $u_{0,p_n}$
into the $v_{p_n}$, we subdivide $\Omega\cart\lbrack0,p_n\lbrack^d$
 into $M_n^d=(p_n/p_0)^d$ sets
$\Omega\cart\prod_{i=1}^d\lbrack\alpha_i,\alpha_i+p_0\lbrack$.
Each of these sets is the product of $\Omega$ with an hypercube 
indexed by $\bm{\alpha}=(\alpha_1,\ldots,\alpha_d)$ and we define
$v_{p_n}(\cdot,\bm{\alpha},\cdot)$ as taking in
$\Omega\cart\lbrack0,p_0\lbrack^d$ 
the same values $u_{0,p_n}$ does in 
$\Omega\cart\prod_{i=1}^d\lbrack\alpha_i,\alpha_i+p_0\lbrack$. The
variable $\bm{y}$ represents the position of the fast variable inside
each hypercube. \textit{I.E.}, we set:
\begin{equation*}
\begin{split}
v_{p_n}:\Omega\cart \mathbb{Z}^d\cart\CubeHomL_{p_0}&\to\mathbb{R},\\
(\bm{x},\bm{\alpha},\bm{y})&\mapsto u_{0,p_n}(\bm{x},\bm{y}+p_0\bm{\alpha}).
\end{split}
\end{equation*}



We have the following proposition
\begin{proposition}\label{prop:RearrangementIsImbricated}
For all $n$ in $\mathbb{N}$, for almost all $\bm{x}$ in $\Omega$ and $\bm{y}$ in
$\CubeHomL_{p_0}$, the $\bm{\alpha}$-indexed sequence
$(v_{p_n}(\bm{x},\bm{\alpha},\bm{y}))_{\bm{\alpha}\in\mathbb{Z}^d}$ is
$M_n$-periodic in each direction  of $\bm{\alpha}$. Moreover:
\begin{equation}\label{eq:defimbricatedperiodicsequences}
v_{p_n}(\bm{x},\bm{\alpha},\bm{y})=\left(\frac{M_n}{M_{n+j}}\right)^d\sum_{\bm{\beta}\in\llbracket0,M_{n+j}/M_n-1\rrbracket^d}v_{p_{n+j}}(\bm{x},\bm{\alpha}+M_n\bm{\beta},\bm{y}),
\end{equation}
for all $\bm{\alpha}$ in $\mathbb{Z}^d$.
\end{proposition}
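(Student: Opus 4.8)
The plan is to derive both assertions directly from the definition of $v_{p_n}$, the identity $p_n=p_0M_n$, and the martingale-like equality~\eqref{eq:MartingaleHomog} already established in Proposition~\ref{prop:MartingaleHomog}; no new analytic ingredient is required, only bookkeeping of indices and some care with the ordering of the almost-everywhere quantifiers. First I would prove the $M_n$-periodicity in each direction of $\bm{\alpha}$. Fixing a coordinate direction $i$ with unit vector $\bm{e}_i$, the definition of $v_{p_n}$ gives
\[
v_{p_n}(\bm{x},\bm{\alpha}+M_n\bm{e}_i,\bm{y})=u_{0,p_n}(\bm{x},\bm{y}+p_0\bm{\alpha}+p_0M_n\bm{e}_i).
\]
Since $p_0M_n=p_n$ and $u_{0,p_n}$ is $p_n$-periodic in each component of its fast variable, the shift by $p_n\bm{e}_i$ leaves it unchanged, so the right-hand side equals $u_{0,p_n}(\bm{x},\bm{y}+p_0\bm{\alpha})=v_{p_n}(\bm{x},\bm{\alpha},\bm{y})$. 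Thus periodicity holds for every $\bm{\alpha}\in\mathbb{Z}^d$, at each $(\bm{x},\bm{y})$ where $u_{0,p_n}$ is defined.

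Next I would obtain~\eqref{eq:defimbricatedperiodicsequences} by substituting the fast variable $\bm{y}+p_0\bm{\alpha}$ into~\eqref{eq:MartingaleHomog}. Writing the summation index of~\eqref{eq:MartingaleHomog} as $\bm{\beta}$ and using $p_{n+j}/p_n=M_{n+j}/M_n$ together with $(p_n/p_{n+j})^d=(M_n/M_{n+j})^d$, equation~\eqref{eq:MartingaleHomog} evaluated at the point $\bm{y}+p_0\bm{\alpha}$ reads
\[
u_{0,p_n}(\bm{x},\bm{y}+p_0\bm{\alpha})=\left(\frac{M_n}{M_{n+j}}\right)^d\sum_{\bm{\beta}\in\llbracket0,M_{n+j}/M_n-1\rrbracket^d}u_{0,p_{n+j}}(\bm{x},\bm{y}+p_0\bm{\alpha}+\bm{\beta}p_n).
\]
The left-hand side is exactly $v_{p_n}(\bm{x},\bm{\alpha},\bm{y})$. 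For each summand, the shift vector factors as $p_0\bm{\alpha}+\bm{\beta}p_n=p_0(\bm{\alpha}+M_n\bm{\beta})$ because $p_n=p_0M_n$, so by the definition of $v_{p_{n+j}}$ one has $u_{0,p_{n+j}}(\bm{x},\bm{y}+p_0(\bm{\alpha}+M_n\bm{\beta}))=v_{p_{n+j}}(\bm{x},\bm{\alpha}+M_n\bm{\beta},\bm{y})$. Substituting these two identities yields precisely~\eqref{eq:defimbricatedperiodicsequences}.

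The only genuinely delicate point, and the closest thing to an obstacle, is the order of quantifiers over null sets. Equation~\eqref{eq:MartingaleHomog} is asserted for fixed $n,j$ and almost every $(\bm{x},\bm{y})$, whereas the claim to be proved must hold, for almost every $(\bm{x},\bm{y})$, simultaneously for \emph{all} $\bm{\alpha}\in\mathbb{Z}^d$. To handle this I would use the periodic extension of $u_{0,p_n}$ and $u_{0,p_{n+j}}$ to $\Omega\cart\mathbb{R}^d$, where both sides of~\eqref{eq:MartingaleHomog} are defined almost everywhere, and note that the translation by $p_0\bm{\alpha}$ is measure preserving. Hence for each fixed $\bm{\alpha}$ the identity fails only on a null set $N_{\bm{\alpha}}\subset\Omega\cart\CubeHomL_{p_0}$; since $\mathbb{Z}^d$ is countable, the union $\bigcup_{\bm{\alpha}\in\mathbb{Z}^d}N_{\bm{\alpha}}$ is still null, and off this union both the periodicity relation and~\eqref{eq:defimbricatedperiodicsequences} hold for every $\bm{\alpha}$, as required.
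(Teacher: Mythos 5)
Your proof is correct and follows exactly the route the paper intends: the paper's own proof is the single sentence that the proposition is a direct consequence of Proposition~\ref{prop:MartingaleHomog}, and what you have written is precisely that derivation spelled out (periodicity from the $p_n$-periodicity of $u_{0,p_n}$, the identity by evaluating~\eqref{eq:MartingaleHomog} at $\bm{y}+p_0\bm{\alpha}$ with $p_0\bm{\alpha}+\bm{\beta}p_n=p_0(\bm{\alpha}+M_n\bm{\beta})$). Your extra care with the countable union of null sets over $\bm{\alpha}\in\mathbb{Z}^d$ is a welcome detail the paper leaves implicit.
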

\begin{proof}
This is a direct consequence of Proposition~\ref{prop:MartingaleHomog}.
\end{proof}

This in turn should encourage us to look at the following problem.
\begin{problem}\label{problem:ImbricatedPeriodicConvergence}
Let's call ``imbricated $(M_n)_n$-periodic $d$-dimensional
sequences'', sequences that satisfy the following properties
$(t_{n,\bm{\alpha}})_{n\in\mathbb{N},\bm{\alpha}\in\mathbb{Z}^d}$ such that
\begin{itemize}
\item for all $n$ in $\mathbb{N}$, the $\bm{\alpha}$-indexed 
sequence $(t_{n,\bm{\alpha}})_{\bm{\alpha}\in\mathbb{Z}^d}$
is $M_n$-periodic in each direction of $\bm{\alpha}$, \textit{i.e.}
such that for all $n$ in $\mathbb{N}$, 
for all $\bm{\alpha}$ in $\mathbb{Z}^d$,
and for all $\bm{\beta}$ in $\mathbb{Z}^d$:
\begin{equation*}
t_{n,\bm{\alpha}}=t_{n,\bm{\alpha}+M_n\bm{\beta}},
\end{equation*}
\item for all $n$ in $\mathbb{N}$, and for all 
$\bm{\alpha}$ in $\mathbb{Z}^d$,
\begin{equation*}
t_{n,\bm{\alpha}}=\left(\frac{M_n}{M_{n+j}}\right)^d\sum_{\bm{\beta}\in\llbracket0,M_{n+j}/{M_n}-1\rrbracket^d}t_{{n+j},\bm{\alpha}+M_n\bm{\beta}}.
\end{equation*}
\end{itemize}
Study the convergence of $(t_{n,\bm{\alpha}})_{n\in\mathbb{N},\bm{\alpha}\in\mathbb{Z}^d}$ as $n$ tends to
$+\infty$. Under which condition does
there exist a sequence $t_{\infty,\bm{\alpha}}$ such that for all
non-negative integers $n$
\begin{equation*}
t_{n,\bm{\alpha}}=\lim_{N\to+\infty}\frac{1}{N^d}\sum_{\bm{\beta}\in\llbracket0,N-1\rrbracket^d}t_{\infty,\bm{\alpha}+M_n\bm{\beta}}.
\end{equation*}
or such that
\begin{equation*}
t_{n,\bm{\alpha}}=\lim_{N\to+\infty}\frac{1}{2^dN^d}\sum_{\bm{\beta}\in\llbracket-N,N-1\rrbracket^d}t_{\infty,\bm{\alpha}+M_n\bm{\beta}}.
\end{equation*}
or both?
\end{problem}
By Proposition~\ref{prop:RearrangementIsImbricated}, for almost all $\bm{x}$ in $\Omega$ and $\bm{y}$ in
$\CubeHomL_{p_0}$, the $\bm{\alpha}$-indexed sequences
$(v_{p_n}(\bm{x},\bm{\alpha},\bm{y}))_{\bm{\alpha}\in\mathbb{Z}^d}$ are imbricated
$(M_n)_n$-periodic $d$-dimensional sequences. Solving Problem~\ref{problem:ImbricatedPeriodicConvergence}
would be the first step in having a very elegant limit to the $v_{p_n}$ as a
function defined on $\Omega\cart\mathbb{Z}^d\cart\CubeHomL_{p_0}$.
Unfortunately, we do not have an answer for Problem~\ref{problem:ImbricatedPeriodicConvergence}. While this
sequence is morally a martingale with respect to the filtration made
of the $\sigma$-fields
$\{\bm{\alpha}+M_n\mathbb{Z}^d,\bm{\alpha}\in\llbracket0,M_n-1\rrbracket^d\}$,
it technically is not: we have the same problem we had in the previous
section. To conclude with bounded martingales on the convergence, we would
need a measure $\mu$ on $\mathbb{Z}^d$ such that
$\mu(\mathbb{Z}^d)=1$, invariant by translation  and such that $\mu(m\mathbb{Z}^d)=1/m$ whenever $m$ is
an integer different from $0$. Such a measure cannot be
$\sigma$-additive. If we remove the $\sigma$ additivity constraint,
then $\mu$ exists: just set
\begin{equation*}
\mu(A):=\lim_{N\to+\infty}\frac{\#(A\cap\llbracket-N,N\rrbracket^d)}{(2N+1)^d}.
\end{equation*}
It is unknown to the author if bounded martingales converge when they
are defined on a non $\sigma$-additive measure. To avoid that problem, 
we introduce, in the next section, a different less natural
rearrangement for the $u_{0,p_n}$, the shuffle, for which we finally prove a
convergence result.

\subsection{Shuffle rearrangement of two-scale limits}\label{subsect:RearrangementContinuous}

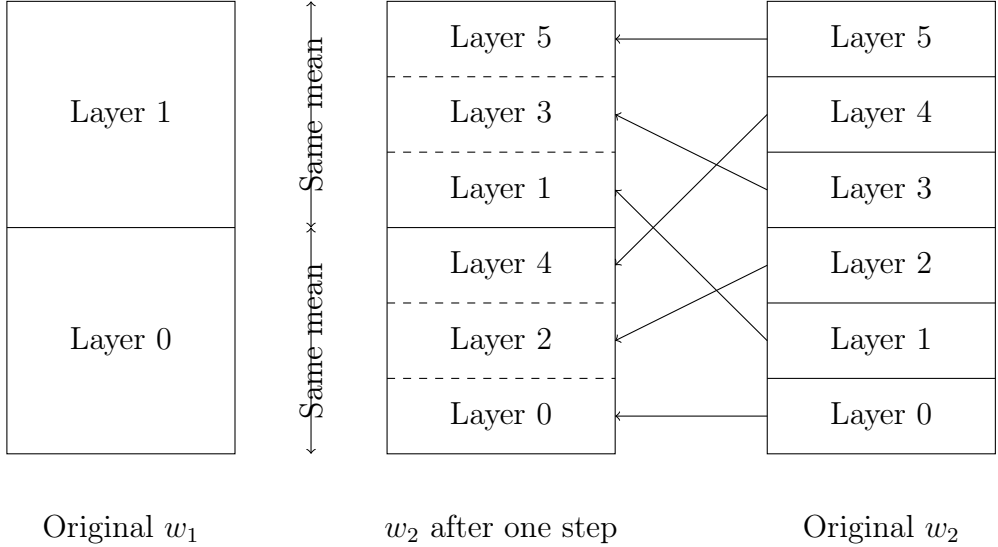
\begin{figure}[t]
\begin{center}
\begin{tikzpicture}
  \draw[xshift=0cm] (0,0) rectangle (3,6);
  \foreach \y in {3} {
    \draw[xshift=0cm]  (0,\y) -- (3,\y);
  }
  \node at (1.5,-1) {Original $w_1$ };
 \draw[xshift=5cm] (0,0) rectangle (3,6);
  \foreach \y in {3} {
    \draw[xshift=5cm]  (0,\y) -- (3,\y);
  }
  \foreach \y in {1,2,4,5} {
    \draw[dashed,xshift=5cm]  (0,\y) -- (3,\y);
  }
  \node[xshift=5cm] at (1.5,-1) {$w_2$ after one step};
 
\draw[<->] (4,0) -- (4,3);
\node[rotate=90] at (4,1.5) {Same mean}; 
\draw[<->] (4,3) -- (4,6);
\node[rotate=90] at (4,4.5) {Same mean};

\node[xshift=0cm] at  (1.5,1.5) {Layer $0$};
\node[xshift=0cm] at  (1.5,4.5) {Layer $1$};

  \draw[xshift=10cm] (0,0) rectangle (3,6);
  \foreach \y in {1,2,3,4,5} {
    \draw[xshift=10cm]  (0,\y) -- (3,\y);
  }
  \node[xshift=10cm] at (1.5,-1) {Original $w_2$};

\node[xshift=5cm] at  (1.5,0.5) {Layer $0$};
\node[xshift=5cm] at (1.5,1.5) {Layer $2$};
\node[xshift=5cm] at  (1.5,2.5) {Layer $4$};
\node[xshift=5cm] at  (1.5,3.5) {Layer $1$};
\node[xshift=5cm] at  (1.5,4.5) {Layer $3$};
\node[xshift=5cm] at  (1.5,5.5) {Layer $5$};

\node[xshift=10cm] at  (1.5,0.5) {Layer $0$};
\node[xshift=10cm] at  (1.5,1.5) {Layer $1$};
\node[xshift=10cm] at  (1.5,2.5) {Layer $2$};
\node[xshift=10cm] at  (1.5,3.5) {Layer $3$};
\node[xshift=10cm] at  (1.5,4.5) {Layer $4$};
\node[xshift=10cm] at  (1.5,5.5) {Layer $5$};

\draw[<-](8,0.5)--(10,0.5);
\draw[<-](8,1.5)--(10,2.5);
\draw[<-](8,2.5)--(10,4.5);
\draw[<-](8,3.5)--(10,1.5);
\draw[<-](8,4.5)--(10,3.5);
\draw[<-](8,5.5)--(10,5.5);

\end{tikzpicture}
\end{center}
\caption{One step of the measure preserving rearrangement $M_1=2$ and $M_2=6$}\label{fig:RearrangementOneStep}
\end{figure}

\begin{figure}[t]
\begin{center}
\begin{tikzpicture}
\draw[xshift=0cm] (0,0) rectangle (3,6);
  \foreach \y in {3} {
    \draw[xshift=0cm]  (0,\y) -- (3,\y);
  }
  \foreach \y in {1,2,4,5} {
    \draw[dashed,xshift=0cm]  (0,\y) -- (3,\y);
  }

 \draw[xshift=5cm] (0,0) rectangle (3,6);
  \foreach \y in {1,2,3,4,5} {
   \draw[xshift=5cm]  (0,\y) -- (3,\y);
 }
  \foreach \y in {0.5,1.5,2.5,3.5,4.5,5.5} {
   \draw[dashed,xshift=5cm]  (0,\y) -- (3,\y);
 }

 \draw[xshift=10cm] (0,0) rectangle (3,6);
  \foreach \y in {0.5,1,1.5,2,2.5,3,3.5,4,4.5,5,5.5} {
    \draw[xshift=10cm]  (0,\y) -- (3,\y);
  }

\draw[<-](3,0.5)--(5,0.5);
\draw[<-](3,1.5)--(5,2.5);
\draw[<-](3,2.5)--(5,4.5);
\draw[<-](3,3.5)--(5,1.5);
\draw[<-](3,4.5)--(5,3.5);
\draw[<-](3,5.5)--(5,5.5);

\draw[<-](8,0.25)--(10,0.25);
\draw[<-](8,0.75)--(10,3.25);
\draw[<-](8,1.25)--(10,0.75);
\draw[<-](8,1.75)--(10,3.75);
\draw[<-](8,2.25)--(10,1.25);
\draw[<-](8,2.75)--(10,4.25);
\draw[<-](8,3.25)--(10,1.75);
\draw[<-](8,3.75)--(10,4.75);
\draw[<-](8,4.25)--(10,2.25);
\draw[<-](8,4.75)--(10,5.25);
\draw[<-](8,5.25)--(10,2.75);
\draw[<-](8,5.75)--(10,5.75);

\node at (11.5, 0.25) {$0$};
\node at (11.5, 0.75) {$1$};
\node at (11.5, 1.25) {$2$};
\node at (11.5, 1.75) {$3$};
\node at (11.5, 2.25) {$4$};
\node at (11.5, 2.75) {$5$};
\node at (11.5, 3.25) {$6$};
\node at (11.5, 3.75) {$7$};
\node at (11.5, 4.25) {$8$};
\node at (11.5, 4.75) {$9$};
\node at (11.5, 5.25) {$10$};
\node at (11.5, 5.75) {$11$};

\node at (6.5, 0.25) {$0$};
\node at (6.5, 0.75) {$6$};
\node at (6.5, 1.25) {$1$};
\node at (6.5, 1.75) {$7$};
\node at (6.5, 2.25) {$2$};
\node at (6.5, 2.75) {$8$};
\node at (6.5, 3.25) {$3$};
\node at (6.5, 3.75) {$9$};
\node at (6.5, 4.25) {$4$};
\node at (6.5, 4.75) {$10$};
\node at (6.5, 5.25) {$5$};
\node at (6.5, 5.75) {$11$};

\node at (1.5, 0.25) {$0$};
\node at (1.5, 0.75) {$6$};
\node at (1.5, 1.25) {$2$};
\node at (1.5, 1.75) {$8$};
\node at (1.5, 2.25) {$4$};
\node at (1.5, 2.75) {$10$};
\node at (1.5, 3.25) {$1$};
\node at (1.5, 3.75) {$7$};
\node at (1.5, 4.25) {$3$};
\node at (1.5, 4.75) {$9$};
\node at (1.5, 5.25) {$5$};
\node at (1.5, 5.75) {$11$};

\node at (1.5,-1) {Second step};
\node at (6.5,-1) {First step};
\node at (11.5,-1) {Original};
\end{tikzpicture}
\end{center}
\caption{Two steps of the measure preserving rearrangement $M_1=2$,
  $M_2=6$ and $M_3=12$}\label{fig:CompleteRearrangement}
\end{figure}
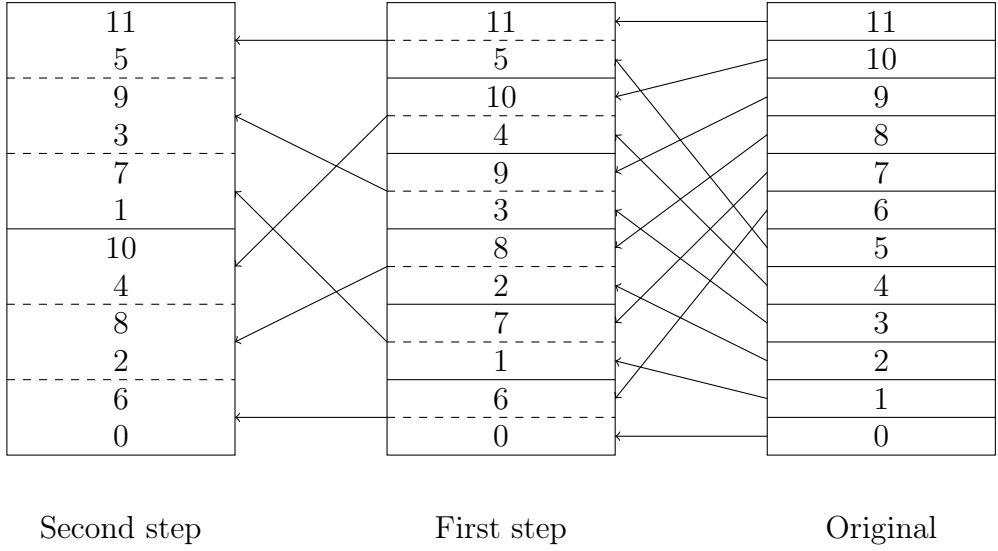

In the previous section, we investigated a rearrangement where the set
$\Omega\cart\lbrack0,p_n\lbrack^d$
was subdivided into $M_n^d$ subsets indexed by
$\bm{\alpha}\in\mathbb{Z}^d$.
In this section, we finally construct a rearrangement, the shuffle, that results in a
bounded martingale; thus establishing a convergence result for the
$p_n$-two-scale limits  as $n$ tends to $+\infty$. To do
so,
we replace the variable $\bm{\alpha}$ belonging to $\mathbb{Z}^d$
 with the variable $\bm{y}'$ that belongs to $\lbrack0,1\lbrack^d$. Like the
 variable $\bm{\alpha}$ of the previous section, the variable
 $\bm{y}'$ indicates which hypercube of edge length $p_0$
we consider. The variable $\bm{y}$ remains unchanged and continue to
represent the location inside the hypercube indexed by $\bm{y}'$.
We set for $\bm{x}$ in $\Omega$, $\bm{y}$ in $\lbrack0,p_0\rbrack^d$
and $\bm{y}'$ in $\lbrack0,1\rbrack^d$,
\begin{equation*}
w_{n}(\bm{x},\bm{y},\bm{y}'):=v_{p_n}(\bm{x},\bm{\alpha}(\bm{y}'),\bm{y}),
\end{equation*}
where $\bm{\alpha}(\bm{y}')_i=\lfloor M_n\bm{y}'_i\rfloor$ for all integers $i$ in
$\llbracket 1,d\rrbracket$.  
Using Proposition~\ref{prop:RearrangementIsImbricated}, we derive that
for almost all $\bm{x}$ in $\Omega$, $\bm{y}$ in $\CubeHomL_{p_0}$,
$(j,n)$ in $\mathbb{N}^2$, and $\bm{\alpha}$ in $\llbracket0,M_n-1\rrbracket^d$
\begin{multline}
\int_{\prod_{i=1}^d\lbrack\frac{\alpha_i}{M_n},\frac{\alpha_i+1}{M_n}
  \lbrack} w_{n}(\bm{x},\bm{y},\bm{y}') \ud\bm{y}'
=\\=
\sum_{\bm{\beta}\in\llbracket0,\frac{M_{n+j}}{M_n}-1\rrbracket^d}
\int_{\prod_{i=1}^d\lbrack\frac{M_n\beta_i+\alpha_i}{M_{n+j}},\frac{M_n\beta_i+\alpha_i+1}{M_{n+j}}\lbrack} w_{n+j}(\bm{x},\bm{y},\bm{y}') \ud\bm{y}'.
\end{multline}

To transform the $w_n$ into martingales, we need to shuffle the
hypercubes as in Figure~\ref{fig:RearrangementOneStep} 
where, to simplify the drawing, 
homogenization was only performed on the last component of
$\mathbb{R}^d$, hence the presence of layers instead of hypercubes. In
that figure, we show one step of the rearrangement. 
As seen in the drawing, each step of the 
rearrangement is measure preserving, therefore the full rearrangement
is also measure preserving. We need $n-1$ such steps to fully
rearrange $w_{n}$. 

To define rigorously this rearrangement, we begin by defining the
function that maps the rearranged layer index onto the unrearranged
layer index:
\begin{subequations}\label{subeq:defHn}
\begin{equation}
\begin{split}
R_{M,m}:&\llbracket0,Mm-1\rrbracket\to \llbracket0,Mm-1\rrbracket\\
i&\mapsto M\cdot(i \mod m)+\left\lfloor\frac{i}{m}\right\rfloor.
\end{split}
\end{equation}
The application $R_{M,m}$ maps $km+j$ to $jM+k$ when $k$ belongs
to $\llbracket0, M-1\rrbracket$ and $j$ belongs 
to $\llbracket0, m-1\rrbracket$. We also have
$R_{M,m}\circ R_{m,M}=R_{m,M}\circ R_{M,m}=\mathrm{Id}$.

Then, we set the function that maps the rearranged layer onto the
unrearranged one:
\begin{equation}
\begin{split}
  h^{*}_{M,m}:\lbrack0,1\lbrack&\to\lbrack0,1\lbrack,\\
  y'&\mapsto \frac{R_{M,m}(\lfloor Mmy'\rfloor)}{Mm}+
\left(y' -\frac{\lfloor Mmy'\rfloor)}{Mm}\right).
\end{split}
\end{equation}

This represents only one step of the rearrangement on one component.
For hypercubes, the permutation is the same but is done componentwise:
we set
\begin{equation*}
\begin{split}
h_{M,m}\mathpunct{:}\rbrack0,1\lbrack^d&\to \rbrack0,1\lbrack^d,\\
(y_1',\dotsc,y_n')&\mapsto(h^{*}_{M,m}(y_1'),\dotsc,h^{*}_{M,m}(y_n')).
\end{split}
\end{equation*}
And obtain one step of the rearrangement on all $d$ components.
For the complete rearrangement on one component, see
Figure~\ref{fig:CompleteRearrangement}, 
we set
\begin{equation}
H^{*}_n:= h^{*}_{M_{n-1},m_n}\circ\dotsc\circ h^{*}_{M_1,m_2}\circ
h^{*}_{M_0,m_1}.
\end{equation}
To get the complete rearrangement on all components we set
\begin{equation}
\begin{split}
H_n\mathpunct{:}\lbrack 0,1\lbrack^d&\to \lbrack0,1\lbrack^d,\\
(y_1',\dotsc,y_n')&\mapsto(H^{*}_n(y_1'),\dotsc,H^{*}_n(y_n')).
\end{split}
\end{equation} 
\end{subequations}
We also have
\begin{equation*}
H_n= h_{M_{n-1},m_n}\circ\dotsc\circ h_{M_1,m_2}\circ
h_{M_0,m_1}.
\end{equation*}
The function $H_n$ shuffles the hypercubes
$\prod_{i=1}^d\lbrack\beta_i/M_n,(\beta_i+1)/M_n\lbrack$, hence we call
$H_n$ the shuffle function.

Finally, we define
\begin{equation}\label{eq:predefwidetildewn}
\widetilde{w}_{n}(\bm{x},\bm{y},\bm{y}'):=w_{n}(\bm{x},\bm{y},H_n(\bm{y}')).
\end{equation} 
This measure preserving rearrangement, the shuffle, is purposefully constructed so the $\widetilde{w}_n$ form a martingale for the 
following filtration of $\sigma$-fields
$\mathcal{F}_n=\mathcal{B}(\Omega)\cart\mathcal{B}(\lbrack0,p_0\rbrack^d)\cart
\sigma\left\{\prod_{i=1}^d
  \left\lbrack\frac{\beta_i}{M_n},\frac{\beta_i+1}{M_n}\right\lbrack,
  \bm{\beta}\in\llbracket 0,M_n-1\rrbracket^d\right\}$.

\begin{remark}
The above rearrangement of hypercubes is similar to the one used for
computing in place the Discrete Fast Fourier Transform: the bit
reversal. In the special case where $M_n=2^n$, the rearrangement simply
exchanges layers $i$, \textit{i.e.}, $\lbrack i/2^n,(i+1)/2^n\lbrack$, and $i'$,
\textit{i.e.}, $\lbrack i'/2^n,(i'+1)/2^n\lbrack$, when $i$ and $i'$ are
 bit reversal permutations of each other. \textit{I.E} when
 $i=\sum_{j=0}^{n-1}b_j2^j$ and $i'=\sum_{j=0}^{N-1}b_j2^{n-1-j}$.
\end{remark}

\begin{remark}\label{remark:explicitHnetoile}
For general $M_n$, the rearrangement of hypercubes is also a bit
reversal but for a mixed basis. If
$\lfloor M_ny'\rfloor=\sum_{j=1}^nb_{j}M_{j-1}$ with $b_j$ in
$\llbracket0,m_j-1\rrbracket$, then
\begin{equation*}
H^*_n\left(\frac{1}{M_n}\sum_{j=1}^nb_j\frac{M_n}{M_j}
+(y'-\frac{\lfloor M_ny'\rfloor}{M_n})\right)=y'.
\end{equation*}
\end{remark}

We now state our main result as a self contained theorem.
\begin{theorem}[Two-Scale Shuffle convergence]\label{theo:TwoScaleLimitsMartingale}
Let $\Omega$ be a bounded open domain of $\mathbb{R}^d$ with $d\geq1$.
Let $(u_\varepsilon)_{\varepsilon>0}$ be a bounded sequence of functions
belonging to $\mathrm{L}^2(\Omega)$. Let $(p_n)_{n\in\mathbb{N}}$ be an increasing
sequence of positive numbers that satisfy Assumption~\ref{assum:IntergerScaleFactors}.
Set for all $n\geq0$ $M_n:=p_n/p_0$ and
for all $n\geq1$ $m_n:=p_n/p_{n-1}$. Let
$(\varepsilon_k)_{k\in\mathbb{N}}$ be a decreasing sequence of
positive real numbers converging to $0$ such that the sequence
$(u_{\varepsilon_k})_{k\in\mathbb{N}}$ $p_n$-two-scale converges to
$u_{0,p_n}$ for all non-negative integer $n$.

Set
\begin{equation*}
\begin{split}
\widetilde{w}_n:\Omega\cart\lbrack0,p_0\rbrack^d\cart\lbrack0,1\rbrack^d&\to\mathbb{R}\\
(\bm{x},\bm{y},\bm{y}')&\mapsto u_{0,p_n}\left(\bm{x}, p_0\lfloor M_nH_n(\bm{y}')\rfloor+\bm{y}\right).
\end{split}
\end{equation*}
where $H_n$ is defined by Equations~\eqref{subeq:defHn}.

Then, the sequence $\widetilde{w}_n$ is a bounded martingale in 
$\mathrm{L}^2(\Omega\cart\lbrack0,p_0\rbrack^d\cart\lbrack0,1\rbrack^d)$,
in the sense of Definition~\ref{defin:martingales},
for the filtration 
\begin{equation}\label{eq:FiltrationRearrangedContinuous}
\mathcal{F}_n=\mathcal{B}(\Omega)\cart\mathcal{B}(\lbrack0,p_0\rbrack^d)\cart
\sigma\left\{\prod_{i=1}^d \left\lbrack\frac{\beta_i}{M_n},\frac{\beta_i+1}{M_n}\right\lbrack,\bm{\beta}\in\llbracket0,M_n-1\rrbracket^d\right\}.
\end{equation}
And, the sequence $\widetilde{w}_n$ converges both strongly in
$\mathrm{L}^2(\Omega\cart\lbrack0,p_0\rbrack^d\cart\lbrack0,1\rbrack^d)$
and almost everywhere in $\Omega\cart\lbrack0,p_0\rbrack^d\cart\lbrack0,1\rbrack^d$
to $\widetilde{w}_\infty$, which we call the Two-Scale Shuffle limit. Moreover,
\begin{equation*}
\iiint_A\widetilde{w}_n(\bm{x},\bm{y},\bm{y}')\ud\bm{y}'\ud\bm{y}\ud\bm{x}=\iiint_A\widetilde{w}_\infty(\bm{x},\bm{y},\bm{y}')\ud\bm{y}'\ud\bm{y}\ud\bm{x},
\end{equation*}
for all sets $A$ in $\mathcal{F}_n$. \textit{I.E.}, by Definition~\ref{defin:ConditionalExpectation},
$\widetilde{w}_n=\mathbb{E}(\widetilde{w}_\infty\vert \mathcal{F}_n)$.
\end{theorem}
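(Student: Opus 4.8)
The plan is to verify directly that $(\widetilde w_n)_n$ satisfies Definition~\ref{defin:martingales} for the filtration~\eqref{eq:FiltrationRearrangedContinuous}, and then to invoke Theorem~\ref{theo:ConvergenceBoundedMartingales} with $q=2$ on the finite measure space $\Omega\cart\lbrack0,p_0\rbrack^d\cart\lbrack0,1\rbrack^d$. First I would record the two structural facts that make the filtration legitimate: since $m_{n+1}=M_{n+1}/M_n$ is an integer, every cube $\prod_i\lbrack\beta_i/M_n,(\beta_i+1)/M_n\lbrack$ is a disjoint union of cubes at level $n+1$, so $\mathcal F_n\subset\mathcal F_{n+1}$; and since $H_n$ is a componentwise piecewise translation permuting the cubes of edge $1/M_n$, the integer vector $\lfloor M_nH_n(\bm y')\rfloor$ is constant on each such cube, whence $\widetilde w_n$ is $\mathcal F_n$-measurable. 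Because $\mathcal F_n$ is the full Borel field in $(\bm x,\bm y)$ tensored with the atomic field generated by the level-$n$ cubes in $\bm y'$, conditioning on $\mathcal F_n$ amounts to averaging in $\bm y'$ over the level-$n$ cube, and the martingale identity for all $j$ follows from the one-step identity by the tower property; so it suffices to prove $\widetilde w_n=\mathbb E(\widetilde w_{n+1}\vert\mathcal F_n)$.

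To prove the one-step identity I would test it against the $\pi$-system of product sets $A\cart B\cart Q_{\bm\beta}$, with $A\in\mathcal B(\Omega)$, $B\in\mathcal B(\lbrack0,p_0\rbrack^d)$ and $Q_{\bm\beta}=\prod_i\lbrack\beta_i/M_n,(\beta_i+1)/M_n\lbrack$, which generates $\mathcal F_n$. Writing $\pi_n(\bm\beta):=\lfloor M_nH_n(\bm y')\rfloor$ for $\bm y'\in Q_{\bm\beta}$ (well defined by the measurability step), one has $\widetilde w_n(\bm x,\bm y,\bm y')=u_{0,p_n}(\bm x,\bm y+p_0\pi_n(\bm\beta))$ on $Q_{\bm\beta}$, so the left-hand integral is $M_n^{-d}u_{0,p_n}(\bm x,\bm y+p_0\pi_n(\bm\beta))$ after the $\bm y'$ integration. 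For the right-hand side I would split $Q_{\bm\beta}$ into its $m_{n+1}^d$ level-$(n+1)$ subcubes and evaluate $\widetilde w_{n+1}$, which is constant on each. The crux is the combinatorial compatibility of the shuffles: using the mixed-radix description of Remark~\ref{remark:explicitHnetoile}, refining the rearranged cube $Q_{\bm\beta}$ appends a new least-significant digit, which under the digit-reversal $\pi_{n+1}$ becomes a most-significant digit of weight $M_n$, so that $\{\pi_{n+1}(\bm\gamma):Q_{\bm\gamma}^{(n+1)}\subset Q_{\bm\beta}\}=\{\pi_n(\bm\beta)+M_n\bm\alpha:\bm\alpha\in\llbracket0,m_{n+1}-1\rrbracket^d\}$. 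Granting this identity, the integral of $\widetilde w_{n+1}$ over $Q_{\bm\beta}$ equals $M_{n+1}^{-d}\sum_{\bm\alpha}u_{0,p_{n+1}}(\bm x,\bm y+p_0(\pi_n(\bm\beta)+M_n\bm\alpha))$, which is exactly $M_n^{-d}u_{0,p_n}(\bm x,\bm y+p_0\pi_n(\bm\beta))$ by the averaging relation~\eqref{eq:MartingaleHomog} (with $p_{n+1}/p_n=m_{n+1}$ and $p_n=p_0M_n$). Integrating over $A\cart B$ then yields the martingale equality. I expect this digit-reversal bookkeeping to be the main obstacle, since everything hinges on the shuffle turning the contiguous block $Q_{\bm\beta}$ into the strided index set demanded by~\eqref{eq:MartingaleHomog}; the rest is routine.

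For boundedness I would exploit that $H_n$ is measure preserving, so $\lVert\widetilde w_n\rVert_{L^2}^2=\lVert w_n\rVert_{L^2}^2$, and that the cubes $\bm y+p_0\bm\beta$, $\bm\beta\in\llbracket0,M_n-1\rrbracket^d$, tile $\lbrack0,p_n\lbrack^d$; a change of variables then gives $\lVert\widetilde w_n\rVert_{L^2(\Omega\cart\lbrack0,p_0\rbrack^d\cart\lbrack0,1\rbrack^d)}^2=p_0^d\,p_n^{-d}\lVert u_{0,p_n}\rVert_{L^2(\Omega\cart\CubeHomL_{p_n})}^2$, which by Proposition~\ref{prop:TwoScaleLimitL2Bound} is at most $p_0^d(\liminf_{\varepsilon\to0}\lVert u_\varepsilon\rVert_{L^2(\Omega)})^2$, uniformly in $n$. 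Since $\Omega$ is bounded the ambient measure is finite, so Theorem~\ref{theo:ConvergenceBoundedMartingales} applies with $q=2$ and yields both the strong $L^2$ and the almost everywhere convergence to a limit $\widetilde w_\infty$. Finally, to close the martingale I would fix $F\in\mathcal F_n$ and pass to the limit in $\int_F\widetilde w_n=\int_F\widetilde w_{n+j}$: strong $L^2$ convergence gives $L^1$ convergence on the finite measure space, so $\int_F\widetilde w_{n+j}\to\int_F\widetilde w_\infty$, whence $\int_F\widetilde w_n=\int_F\widetilde w_\infty$ for every $F\in\mathcal F_n$, which is precisely $\widetilde w_n=\mathbb E(\widetilde w_\infty\vert\mathcal F_n)$ by Definition~\ref{defin:ConditionalExpectation}.
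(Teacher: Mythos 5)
Your proposal is correct and follows essentially the same route as the paper: the martingale property is reduced to the one-step identity on level-$n$ cubes, verified through the shuffle combinatorics (your identity $\{\pi_{n+1}(\bm\gamma)\}=\{\pi_n(\bm\beta)+M_n\bm\alpha\}$ is exactly the paper's computation $R_{M_n,m_{n+1}}(m_{n+1}\bm\beta+\bm\beta')=\bm\beta+M_n\bm\beta'$ combined with $H_n$ permuting the level-$n$ cubes) together with the averaging relation of Proposition~\ref{prop:MartingaleHomog}, and then boundedness via Proposition~\ref{prop:TwoScaleLimitL2Bound} and convergence via Theorem~\ref{theo:ConvergenceBoundedMartingales}. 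You are somewhat more explicit than the paper on the measurability and filtration checks and on passing to the limit to identify $\widetilde w_n=\mathbb{E}(\widetilde w_\infty\vert\mathcal F_n)$, but these are routine additions, not a different argument.
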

\begin{proof}
The $\widetilde{w}_n$ were constructed specifically so as to be a
martingale for the
filtration~\eqref{eq:FiltrationRearrangedContinuous}.
To prove they are a martingale for the filtration $(\mathcal{F}_n)_{n\in\mathbb{N}}$, we only need to prove that for all
non-negative integer $n$, for almost all $\bm{x}$ in $\Omega$, almost all
$\bm{y}$ in $\lbrack0,p_0\rbrack^{d}$ and for all $\bm{\beta}$ in
$\llbracket0,M_{n}-1\rrbracket^d$,  we have 
\begin{equation*}
\int_{\prod_{i=1}^d\lbrack\frac{\beta_i}{M_n},\frac{\beta_i+1}{M_n}\lbrack}\widetilde{w}_{n+1}(\bm{x},\bm{y},\bm{y}') \ud\bm{y}'
=
\int_{\prod_{i=1}^d\lbrack\frac{\beta_i}{M_n},\frac{\beta_i+1}{M_n}\lbrack}\widetilde{w}_n(\bm{x},\bm{y},\bm{y}') \ud\bm{y}'.
\end{equation*}
\textit{I.E.}, we need to show that
\begin{equation*}
\int_{\prod_{i=1}^d\lbrack\frac{\beta_i}{M_n},\frac{\beta_i+1}{M_n}\lbrack}w_n(\bm{x},\bm{y},H_n(\bm{y}')) \ud\bm{y}'
=\int_{\prod_{i=1}^d\lbrack\frac{\beta_i}{M_n},\frac{\beta_i+1}{M_n}\lbrack}w_{n+1}(\bm{x},\bm{y},H_{M_n,m_{n+1}}\circ
H_n(\bm{y}')) \ud\bm{y}'.
\end{equation*}
But $H_n$ maps any hypercube $\prod_{i=1}^d\lbrack\beta_i/M_n,(\beta_i+1)/M_n\lbrack$
to another hypercube  $\prod_{i=1}^d\lbrack\beta'_i/M_n,(\beta'_i+1)/M_n\lbrack$ and
$H_n$ is measure preserving. Therefore, we only need to prove that 
for almost all $\bm{x}$ in $\Omega$, almost all
$\bm{y}$ in $\lbrack0,p_0\rbrack^{d}$ and for all $\bm{\beta}$ in
$\llbracket0,M_{n}-1\rrbracket^d$
\begin{equation*}
\int_{\prod_{i=1}^d\lbrack\frac{\beta_i}{M_n},\frac{\beta_i+1}{M_n}\lbrack}w_n(\bm{x},\bm{y},\bm{y}') \ud\bm{y}'
=
\int_{\prod_{i=1}^d\lbrack\frac{\beta_i}{M_n},\frac{\beta_i+1}{M_n}\lbrack}w_{n+1}(\bm{x},\bm{y},H_{M_n,m_{n+1}}(\bm{y}')) \ud\bm{y}'.
\end{equation*}
is satisfied. But this equality is equivalent to 
\begin{multline*}
\frac{1}{M_n^d}v_{p_n}(\bm{x},\bm{\beta},\bm{y})
=\\=
\frac{1}{M_{n+1}^d}\sum_{\bm{\beta}'\in\llbracket0,m_{n+1}-1\rrbracket^d}
v_{p_{n+1}}(\bm{x},R_{M_n,m_{n+1}}(m_{n+1}\bm{\beta}+\bm{\beta'}),\bm{y})
=\\=
\frac{1}{M_{n+1}^d}\sum_{\bm{\beta}'\in\llbracket0,m_{n+1}-1\rrbracket^d}
v_{p_{n+1}}(\bm{x},(\bm{\beta}+\bm{\beta'}M_n),\bm{y}),
\end{multline*}
which is true by
Proposition~\ref{prop:RearrangementIsImbricated}. Therefore, the
sequence $\widetilde{w}_n$ is a martingale for the filtration $(\mathcal{F}_n)_{n\in\mathbb{N}}$.

By Proposition~\ref{prop:TwoScaleLimitL2Bound}, this martingale is bounded in $\mathrm{L}^2$.
It converges both strongly in $\mathrm{L}^2(\Omega\cart\lbrack0,p_0\rbrack^d\cart\lbrack0,1\rbrack^d)$ 
and almost everywhere to a function $\widetilde{w}_\infty$,
see~\cite[Corollary~7.22]{Kallenberg:FoundationsProbability}.
\end{proof}

\begin{corollary}\label{corrol:RecoveryTwoScale}
It is possible to recover $u_{0,p_n}$ from the Two-Scale Shuffle limit
$\widetilde{w}_{\infty}$. First, for all $\bm{\beta}$ in
$\llbracket0,M_{n}-1\rrbracket^d$, all $\bm{y}'$ in
$\prod_{i=1}^d\lbrack\beta_i/M_n,(\beta_i+1)/M_n\lbrack$, and almost all
$(\bm{x},\bm{y})$
in $\Omega\cart\lbrack0,p_0\rbrack^d$, we have
\begin{equation*}
\widetilde{w}_n(\bm{x},\bm{y},\bm{y}')=M_n^d
\int_{\prod_{i=1}^d\lbrack\frac{\beta_i}{M_n},\frac{\beta_i+1}{M_n}\lbrack}\widetilde{w}_\infty(\bm{x},\bm{y},\bm{y}')\ud\bm{y}'
\end{equation*}
because
$\widetilde{w}_n=\mathbb{E}(\widetilde{w}_\infty\vert\mathcal{F}_n)$.
Since the shuffle function $H_n$ is one to one from $\lbrack0,1\lbrack^d$ to
$\lbrack0,1\lbrack^d$, see Remark~\ref{remark:explicitHnetoile}, we have 
$w_n(\bm{x},\bm{y},\bm{y}') =\widetilde{w}_n(\bm{x},\bm{y}, H_n^{-1}(\bm{y}'))$.
Finally, $u_{0,p_n}(\bm{x},\bm{y})$ is equal to the
constant value taken by
$\bm{y}'\mapsto w_n(\bm{x},\bm{y}- p_0\lfloor{\bm{y}}/{p_0}\rfloor, \bm{y}')$
when $\bm{y}'$ belongs to the hypercube
$\lbrack\lfloor{\bm{y}}/{p_0}\rfloor/M_n,(\lfloor{\bm{y}}/{p_0}\rfloor+1)/{M_n}\lbrack$.
\end{corollary}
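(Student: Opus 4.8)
The plan is to read the recovery directly off the martingale identity $\widetilde{w}_n=\mathbb{E}(\widetilde{w}_\infty\vert\mathcal{F}_n)$ furnished by Theorem~\ref{theo:TwoScaleLimitsMartingale}, and then to invert, one layer at a time, the chain of transformations $u_{0,p_n}\mapsto v_{p_n}\mapsto w_n\mapsto\widetilde{w}_n$ used to build the martingale. Each of the three arrows is an explicit, invertible bookkeeping operation, so the content of the corollary is to check that composing their inverses with the averaging built into the conditional expectation reproduces $u_{0,p_n}$ almost everywhere.

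First I would record the structure of $\mathcal{F}_n$ in \eqref{eq:FiltrationRearrangedContinuous}: in the last variable it is purely atomic, its atoms being exactly the hypercubes $Q_{\bm{\beta}}:=\prod_{i=1}^d\lbrack\beta_i/M_n,(\beta_i+1)/M_n\lbrack$ for $\bm{\beta}\in\llbracket0,M_n-1\rrbracket^d$, each of measure $M_n^{-d}$, whereas on $\Omega$ and on $\lbrack0,p_0\rbrack^d$ it is the full Borel $\sigma$-field. Hence $\mathbb{E}(\,\cdot\,\vert\mathcal{F}_n)$ leaves the $(\bm{x},\bm{y})$ dependence intact and replaces a function on each $Q_{\bm{\beta}}$ by its mean over $Q_{\bm{\beta}}$. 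Applied to $\widetilde{w}_\infty$, with $\widetilde{w}_n=\mathbb{E}(\widetilde{w}_\infty\vert\mathcal{F}_n)$, this gives for almost every $(\bm{x},\bm{y})$ and every $\bm{y}'\in Q_{\bm{\beta}}$ the first displayed identity of the corollary, the factor $M_n^d$ being $1/|Q_{\bm{\beta}}|$; the right-hand side is constant in $\bm{y}'$ across $Q_{\bm{\beta}}$, as it must be since $\widetilde{w}_n$ is $\mathcal{F}_n$-measurable.

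Next I would undo the shuffle. By Remark~\ref{remark:explicitHnetoile}, $H_n$ is a bijection of $\lbrack0,1\lbrack^d$ onto itself, being a composition of the maps $h_{M_{j-1},m_j}$, each inverted by $h_{m_j,M_{j-1}}$ through $R_{M,m}\circ R_{m,M}=\mathrm{Id}$. From $\widetilde{w}_n(\bm{x},\bm{y},\bm{y}')=w_n(\bm{x},\bm{y},H_n(\bm{y}'))$ one then gets $w_n(\bm{x},\bm{y},\bm{y}')=\widetilde{w}_n(\bm{x},\bm{y},H_n^{-1}(\bm{y}'))$, recovering $w_n$. Finally I would unwind $w_n(\bm{x},\bm{y},\bm{y}')=v_{p_n}(\bm{x},\bm{\alpha}(\bm{y}'),\bm{y})$ with $\alpha(\bm{y}')_i=\lfloor M_ny'_i\rfloor$ and $v_{p_n}(\bm{x},\bm{\alpha},\bm{y})=u_{0,p_n}(\bm{x},\bm{y}+p_0\bm{\alpha})$: for a target fast coordinate $\bm{Y}$ (the corollary's $\bm{y}$) I set $\bm{y}=\bm{Y}-p_0\lfloor\bm{Y}/p_0\rfloor\in\lbrack0,p_0\lbrack^d$ and $\bm{\alpha}=\lfloor\bm{Y}/p_0\rfloor$, and pick any $\bm{y}'$ in $\prod_{i=1}^d\lbrack\alpha_i/M_n,(\alpha_i+1)/M_n\lbrack$ so that $\bm{\alpha}(\bm{y}')=\bm{\alpha}$; then $w_n(\bm{x},\bm{y},\bm{y}')$ is constant on that hypercube and equals $u_{0,p_n}(\bm{x},\bm{Y})$, which is the last assertion of the corollary.

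I expect the only genuine subtlety to be justifying that $\mathbb{E}(\,\cdot\,\vert\mathcal{F}_n)$ acts on $\widetilde{w}_\infty$ purely by local averaging in the third variable. To establish this cleanly I would verify the defining identity on the generating $\pi$-system of measurable rectangles $B_1\times B_2\times Q_{\bm{\beta}}$ with $B_1\in\mathcal{B}(\Omega)$ and $B_2\in\mathcal{B}(\lbrack0,p_0\rbrack^d)$, checking $\int_{B_1\times B_2\times Q_{\bm{\beta}}}\widetilde{w}_\infty=\int_{B_1\times B_2\times Q_{\bm{\beta}}}M_n^d\big(\int_{Q_{\bm{\beta}}}\widetilde{w}_\infty\ud\bm{z}'\big)$ by Fubini, and then invoking uniqueness of the conditional expectation. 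Everything else is deterministic bookkeeping on floor functions and on the bijection $H_n$, performed for almost every $(\bm{x},\bm{y})$; since there are only countably many $n$, the exceptional null sets may be discarded simultaneously.
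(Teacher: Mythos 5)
Your proposal is correct and follows essentially the same route as the paper, whose ``proof'' of this corollary is in fact the chain of observations embedded in the statement itself: read off the conditional expectation $\widetilde{w}_n=\mathbb{E}(\widetilde{w}_\infty\vert\mathcal{F}_n)$ as averaging over the atoms $\prod_i\lbrack\beta_i/M_n,(\beta_i+1)/M_n\lbrack$, invert the bijective shuffle $H_n$, and unwind the floor-function bookkeeping back to $u_{0,p_n}$. Your extra verification that the conditional expectation acts by local averaging in the third variable (via the generating rectangles and Fubini) is a welcome but minor addition of rigor, not a different approach.
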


\section{Application: heat equation in
  multilayers}\label{sect:application}
In this section, we consider the multilayer heat
equation with three spatial dimensions which we homogenize along the
vertical space variable, \textit{i.e.}, along the
direction perpendicular to the layers.

In~\cite{Santugini:HomogenizationMultilayerHeat}, the author
established the equations satisfied by the two-scale limits of the
heat equation in multilayers with transmission conditions between
adjacent layers. 
When the magnitude of the interlayer conductivity between
  adjacent layers is weak, see~\cite[\S6.1]{Santugini:HomogenizationMultilayerHeat}, 
 the two-scale limit depends 
on the number of layers present in the homogenization period,
\textit{i.e.}, on the scale factors $p_n$. For given values of the slow
variables $(\bm{x},t)$, the two scale limit is piecewise constant in its 
scalar fast variable $y$ and takes as many values as there are layers
in a single homogenization cell.
Our goal is to establish the equation satisfied by the limit of two-scale
limits, \textit{i.e.}, the Two-Scale Shuffle limit, as
  defined in Theorem~\ref{theo:TwoScaleLimitsMartingale}.

To do so, we first recall previously known results in
\S\ref{subsect:MultilayerHeatPreviouslyKnownResults}, then derive new
results using Two-Scale Shuffle Convergence in \S\ref{subsect:MultilayerHeatNewResults}.

\subsection{The two-scale limit of the multilayer
    heat equation}\label{subsect:MultilayerHeatPreviouslyKnownResults}
We start by recalling some results we obtained in~\cite{Santugini:HomogenizationMultilayerHeat}.
To avoid unnecessary complications, we consider here a simpler problem
than the one considered in~\cite[System (4.1)]{Santugini:HomogenizationMultilayerHeat}.
Let $\Omega$ be $B\cart \rbrack0,1\lbrack$ where $B$ is a convex bounded open subset of $\mathbb{R}^{2}$ with smooth
boundary. 
Let $\delta$, $0<\delta<1/2$. Let $I$ be the interval $\rbrack\delta,1-\delta\lbrack$. 
For all $N$, let $I_N$ be $\bigcup_{j=0}^{N-1}\rbrack(j+\delta)/N,(j+1-\delta)/N\lbrack$. 
Let
$\Omega^N$ be the domain $B\cart I_N$.
Let $\Gamma^{N,+}_j=B\cart
\{(j+\delta)/N\}$ and 
$\Gamma^{N,-}_j=B\cart \{(j-\delta)/N\}$.
Let $\Gamma^{N,+}=\bigcup_{j=1}^{N-1}\Gamma^{N,+}_j$
and $\Gamma^{N,-}=\bigcup_{j=1}^{N-1}\Gamma^{N,-}_j$.
Let $\Gamma_e=\partial B\cart\rbrack0,1\lbrack\cup\Gamma^{N,+}_0\cup\Gamma^{N,-}_N$.
Let $\gamma$ be the application on $\partial\Omega^N$ that maps $u$ in
$\mathrm{H}^1(\Omega^N)$ to its trace on $\partial\Omega^N$.
Let $\gamma'$ be the trace operator swapped between 
 $\Gamma^{N,+}_j$ and $\Gamma^{N,-}_j$: \textit{i.e.}, 
$\gamma'u(\hat{\bm{x}},(j\pm\delta)/N)=\gamma u(\hat{\bm{x}},(j\mp\delta)/N)$
for all $\hat{\bm{x}}$ in $B$ and
all $j$ in $\llbracket1,N-1\rrbracket$. 
See  Figure~\ref{fig:MultilayerHeatGeometry} where we schematized the
   three dimensional domain $\Omega^N$ by projecting it onto the two-dimensional plane. 
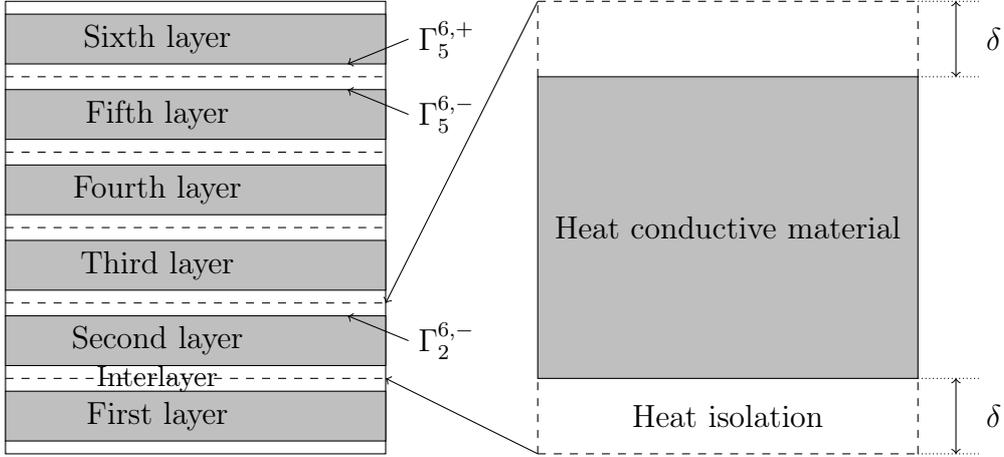
\begin{figure}
\begin{tikzpicture}
\draw (0,0) rectangle (5,6);
\foreach \y in {0,1,2,3,4,5} {
  \draw[fill=lightgray] (0,\y+0.17) rectangle (5,\y+0.83);
}
\foreach \y in {1,2,3,4,5} {
 \draw[dashed]  (0,\y) -- (5,\y);
}

\draw[<-] (4.5, 5.17) -- (5.3,5.5);
\node at (5.8,5.5) {$\Gamma^{6,+}_5$} ;
\draw[<-] (4.5, 4.83) -- (5.3,4.5);
\node at (5.8,4.5) {$\Gamma^{6,-}_5$} ;

\draw[<-] (4.5, 1.83) -- (5.3,1.5);
\node at (5.8,1.5) {$\Gamma^{6,-}_2$} ;

\node at (2,0.5) {First layer};
\node at (2,1) {{\small Interlayer}};
\node at (2,1.5) {Second layer};
\node at (2,2.5) {Third layer};
\node at (2,3.5) {Fourth layer};
\node at (2,4.5) {Fifth layer};
\node at (2,5.5) {Sixth layer};

\draw[->] (7,6) -- (5,2);
\draw[->] (7,0) -- (5,1);
\draw[fill=lightgray] (7,1) rectangle (12,5);
\node at (9.5,3) {Heat conductive material};
\node at (9.5,0.5) {Heat isolation};
 \foreach \y in {0,6} {
    \draw[dashed] (7,\y) -- (12,\y);
 }
 \draw[dashed] (7,0) -- (7,1);
 \draw[dashed] (7,5) -- (7,6);
 \draw[dashed] (12,0) -- (12,1);
 \draw[dashed] (12,5) -- (12,6);

 \draw[<->] (12.5,0) -- (12.5,1) ;
 \draw[<->] (12.5,5) -- (12.5,6) ;
 \draw[densely dotted] (12,0) -- (12.8,0) ;
\draw[densely dotted] (12,1) -- (12.8,1) ;
 \draw[densely dotted] (12,5) -- (12.8,5) ;
\draw[densely dotted] (12,6) -- (12.8,6) ;
\node at (13,5.5) {$\delta$};
  \node at (13,0.5) {$\delta$};
\end{tikzpicture}
\caption{The multilayer geometry, six layers: $N=6$.}\label{fig:MultilayerHeatGeometry}
\end{figure}

Let $A$, $K$ and $J$ be positive reals: $A$ represents the
  heat conductivity inside $\Omega^N$, and $J$ is the magnitude of the surfacic
interlayer conductivity.
For all positive integer $N$, we consider the multilayer heat equation 
\begin{subequations}\label{subeq:MultilayerHeat}
\begin{equation}
\frac{\partial u_N}{\partial t}-A\Lapl u_N=0\text{ in $\Omega^N\cart\mathbb{R}^+$}
\end{equation}
with the boundary conditions
\begin{equation}
A\frac{\partial u_N}{\partial\bm{\nu}}=
\begin{cases}
0& \text{on $\Gamma_e\cart\mathbb{R}^+$}\\
-\frac{K} {N}\gamma u_N +\frac{J}{N}(\gamma'u_N-\gamma u_N)  
&\text{on $(\Gamma^{N,+}\cup\Gamma^{N,-})\cart\mathbb{R}^+$,}
\end{cases}
\end{equation} 
and the initial condition
\begin{equation}
u_N(\cdot,0)=u_{N}^0.
\end{equation}
\end{subequations}
We also define the energy
\begin{equation*}
\begin{split}
E^N(v)&=\frac{A}{2}\int_{\Omega^N}\lVert\nabla v(\bm{x})\rVert^2\ud\bm{x}
+\frac{K}{2N}\int_{\Gamma^{N,+}\cup\Gamma^{N,-}}\lvert \gamma
v\rvert^2\ud\sigma(\bm{x})
\\&\phantom{=}
+\frac{J}{2N}\sum_{j=1}^{N-1}\int_B\left\lvert
v(\hat{\bm{x}},\frac{j+\delta}{N})-v(\hat{\bm{x}},\frac{j-\delta}{N})
\right\rvert^2\ud\hat{\bm{x}},
\end{split}
\end{equation*}
for all $v$ in $H^1(\Omega^N)$. We suppose
\begin{equation*}
\sup_{N}E^N(u_{N}^0)<+\infty,
\end{equation*}
and denote by $u_{0,M}^0$ the $M$-two-scale
limit of the initial conditions $u_{N}^0$. We have
the energy equality
\begin{equation*}
E^N(u_{N}(\cdot,T))
+\int_0^T\int_{\Omega^N}\left\lvert\frac{\partial u_N}{\partial t}\right\rvert^2\ud\bm{x}\ud t=E^N(u_{N}(\cdot,T)),
\end{equation*}
for all $T\geq0$.
Because of the 
energy bound, for all $\bm{x}$
in $\Omega$ and all $j$ in $\mathbb{Z}/M\mathbb{Z}$, the function
$y\mapsto u_{0,M}^0(\bm{x},y)$ is constant in the
interval $\rbrack j+\delta,j+1-\delta\lbrack$. Moreover, that function is 
$M$-periodic in $y$. 
We denote by $u_{0,M_n,j}^0(\bm{x})$ the value taken by the function
$y\mapsto u_{0,M}^0(\bm{x},y)$ 
 in the interval $\rbrack j+\delta,j+1-\delta\lbrack$.

Using two-scale convergence~\cite{Allaire:TwoScaleConvergence,Nguetseng:TheoryHomogenization} 
and its variant on periodic surfaces~\cite{NeussRadu:These,
 NeussRadu:ExtensionsTwoScaleConv,Allaire.Damlamian.Hornung:TwoScaleSurfacicConvergence},
the properties of the two-scale limit of
$(u^N)_{N\in\mathbb{N}}$, solutions to the multilayer heat
system~\eqref{subeq:MultilayerHeat} with $J\neq 0$,
were established
in~\cite[Theorem~6.1]{Santugini:HomogenizationMultilayerHeat}. 

Let $(M_n)_{n\geq0}$ be a sequence of positive integers such that $M_0=1$ and 
such that $M_{n+1}$ is always an entire multiple of $M_n$. 
For all $(\bm{x},t)$ in $\Omega\cart\mathbb{R}^+$, 
the $M_n$-two-scale limit $y\mapsto u_{0,M_n}(\bm{x},t,y)$
takes $M_n$ values: it is constant in each interval
$\rbrack j+\delta, j+1-\delta\lbrack$. 
For $j$ in $\mathbb{Z}/M_n\mathbb{Z}$, we note $u_{0,M_n,j}(\bm{x},t)$ the value
of $u_{0,M_n}(\bm{x},t,\cdot)$ in this interval. We have $u_{0,M_n,j+M_n}(\bm{x},t)=u_{0,M_n,j}(\bm{x},t)$.
These functions satisfy, for all $j$ in $\mathbb{Z}/M_n\mathbb{Z}$,
the weak formulation of
\begin{subequations}\label{subeq:MultilayerTwoScaleLimit}
\begin{multline}\label{eq:WeakFormulationTwoScaleLimit}
\frac{\partial u_{0,M_n,j}}{\partial t}-A\Lapl_{\mathrm{T}}
u_{0,M_n,j}+\frac{2K}{1-2\delta}u_{0,M_n,j}
+\\+\frac{J}{1-2\delta}(2u_{0,M_n,j}-u_{0,M_n,j+1}-u_{0,M_n,j-1})
=0,
\end{multline}
in $\Omega\cart\mathbb{R}^+$, and where $\Lapl_{\mathrm{T}}=\frac{\partial^2}{\partial 
x_1^2}+\frac{\partial^2}{\partial x_2^2}$, 
$\nabla_{\mathrm{T}}=\lbrack\frac{\partial}{\partial
  x_1},\frac{\partial}{\partial x_2}\rbrack^{\mathrm{T}}$. And
 with boundary conditions
\begin{equation}\label{eq:WeakFormulationTwoScaleLimitBoundary}
\frac{\partial u_{0,M_n,j}}{\partial\bm{\nu}}=0\text{ on $(\partial B \cart\rbrack0,1\lbrack)\cart\mathbb{R}^+$,}
\end{equation}
and initial condition
\begin{equation}\label{eq:WeakFormulationTwoScaleLimitInitial}
u_{0,M_n,j} (\cdot,0)=u_{0,M_n,j}^0\text{ in $B \cart\rbrack0,1\lbrack$.}
\end{equation}
\end{subequations}

We have recalled previously known results on the
  properties of the two-scale limits of the multilayer heat
  equation. In the next section, we establish the properties satisfied
  by the Two-Scale Shuffle limit of the multilayer heat equation.

\subsection{The two-scale shuffle limit of the multilayer
    heat equation}\label{subsect:MultilayerHeatNewResults}

We now use Theorem~\ref{theo:TwoScaleLimitsMartingale} to have the
two-scale limits themselves converge. We choose $M_n=2^n$ to avoid
complications at first. We establish the following:
\begin{theorem}\label{theo:ShuffleLimitHeatEquationMultilayers}
Let $\widetilde{w_\infty}$ be the Two-Scale Shuffle limit defined from $(u_{0,2^n})_{n\in\mathbb{N}}$ as in 
Theorem~\ref{theo:TwoScaleLimitsMartingale}. For all $(\bm{x},t)$ in
$\Omega\cart\mathbb{R}^+$,
and $y'$ in $\lbrack0,1\rbrack$. The function
$\widetilde{w}_\infty(\bm{x},t,\cdot,y')$ is constant
in the interval $\rbrack\delta,1-\delta\lbrack$. If we denote by
$\widetilde{w}_\infty(\bm{x},t,y')$ the value of
$\widetilde{w}_\infty(\bm{x},t,\cdot,y')$
inside the interval $\rbrack\delta,1-\delta\lbrack$, the Two-Scale Shuffle limit
$\widetilde{w}_\infty$ is a weak solution to:
\begin{subequations}
\begin{equation}\label{eq:WeakFormulationShuffleLimit}
\begin{split}
\frac{\partial\widetilde{w}_\infty}{\partial t}(\bm{x},t,y')-A\Lapl_{\mathrm{T}}
\widetilde{w}_\infty (\bm{x},t,y')+\frac{2K}{1-2\delta} \widetilde{w}_\infty
(\bm{x},t,y')
&\\
+\frac{J}{1-2\delta}(2\widetilde{w}_\infty (\bm{x},t,y')-
\widetilde{w}_\infty (\bm{x},t,\tau^+ (y'))-\widetilde{w}_\infty (\bm{x},t,\tau^-(y')))
=&0,
\end{split}
\end{equation}
in $\Omega\cart\mathbb{R}^+\cart\rbrack0,1\lbrack$, and
where, for all non-negative integers $j$:
\begin{align*}
\tau^+(y')&=y'+3\cdot2^{-(j+1)}-1
\text{ when $1-2^{-j}\leq  y'<1-2^{-(j+1)}$}, \\
\tau^-(y')&=y'-3\cdot2^{-(j+1)}+1
\text{ when $2^{-(j+1)}\leq y'<2^{-j}$}.
\end{align*}
with boundary conditions
\begin{equation}
\frac{\partial \widetilde{w}_\infty}{\partial\bm{\nu}}=0\text{ on $(\partial B \cart\rbrack0,1\lbrack)\cart\mathbb{R}^+\cart\rbrack0,1\lbrack$,}
\end{equation}
and initial condition
\begin{equation*}
\widetilde{w}_\infty(\cdot,0,\cdot)=\widetilde{w}_\infty^0.
\end{equation*}
where $\widetilde{w}_\infty^0$ is the Two-Scale Shuffle Limit of the
initial conditions and where, as an abuse of notations, we denote by
$\widetilde{w}_\infty^0(\bm{x},t,y')$ the constant value taken by 
$y\mapsto\widetilde{w}_\infty^0(\bm{x},t,y,y')$ in the interval $\rbrack\delta,1-\delta\lbrack$.
\end{subequations}
\end{theorem}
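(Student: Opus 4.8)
The plan is to pass to the limit $n\to\infty$ in the weak formulation~\eqref{eq:WeakFormulationTwoScaleLimit} satisfied by the two-scale limits $u_{0,2^n,j}$, after reassembling the finite family $(u_{0,2^n,j})_j$ into the single function $\widetilde{w}_n$ and re-expressing the nearest-neighbour coupling $j\mapsto j\pm1$ as explicit maps on the shuffle variable $y'$. First I would record the dictionary between the two pictures. Since $M_n=2^n$ and $m_n=2$, Remark~\ref{remark:explicitHnetoile} shows that $\lfloor M_nH_n(y')\rfloor$ is the bit-reversal of $\lfloor M_ny'\rfloor$; writing $\rho_n$ for this bit-reversal permutation on $\llbracket0,M_n-1\rrbracket$, one has for $y$ in $\rbrack\delta,1-\delta\lbrack$ and $y'$ in the dyadic interval $\lbrack k/M_n,(k+1)/M_n\lbrack$ the identity $\widetilde{w}_n(\bm{x},t,y,y')=u_{0,2^n,\rho_n(k)}(\bm{x},t)$. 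In particular $u_{0,2^n,j}$ is the constant value of $\widetilde{w}_n$ on the interval indexed by $\rho_n(j)$, and the neighbours $u_{0,2^n,j\pm1}$ are the values on the intervals indexed by $\rho_n(j\pm1)$.

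The key step is to introduce, for each $n$, the measure preserving maps $\tau_n^{\pm}\colon\lbrack0,1\lbrack\to\lbrack0,1\lbrack$ sending the dyadic interval indexed by $\rho_n(j)$ affinely onto the one indexed by $\rho_n(j\pm1)$, so that $u_{0,2^n,j\pm1}(\bm{x},t)=\widetilde{w}_n(\bm{x},t,y,\tau_n^{\pm}(y'))$, and to show that $\tau_n^{\pm}$ converge to the maps $\tau^{\pm}$ of the statement. Writing $y'=\sum_{i\geq1}b_i2^{-i}$ in binary, incrementing $j=\rho_n(\lfloor M_ny'\rfloor)$ by one amounts, in the bit-reversed coordinate, to flipping the leading block of $r$ ones of $y'$ to zeros and the following zero to a one; a direct computation then shows that this shifts $y'$ by exactly $3\cdot2^{-(r+1)}-1$, where $r$ is the number of leading ones of $y'$, i.e.\ the integer characterised by $1-2^{-r}\leq y'<1-2^{-(r+1)}$. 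This is precisely $\tau^+$, and the symmetric computation for $j\mapsto j-1$ on the intervals with $2^{-(r+1)}\leq y'<2^{-r}$ yields $\tau^-$. Consequently, for almost every $y'$ one has $\tau_n^{\pm}(y')=\tau^{\pm}(y')$ as soon as $n$ exceeds the relevant block length $r$; in particular $\tau_n^{\pm}\to\tau^{\pm}$ almost everywhere, while each $\tau_n^{\pm}$ is measure preserving.

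With this dictionary, I would test the weak form of~\eqref{eq:WeakFormulationTwoScaleLimit} against a smooth $\varphi(\bm{x},t,y')$ and integrate in $y'$ over $\rbrack0,1\lbrack$; since $\widetilde{w}_n$ is constant on each dyadic interval, the finite family of equations reassembles into a single weak identity for $\widetilde{w}_n$. Moving every space-time derivative onto $\varphi$ by integration by parts, the Neumann condition~\eqref{eq:WeakFormulationTwoScaleLimitBoundary} killing the boundary terms, leaves an identity in which $\widetilde{w}_n$ enters only through itself and its shuffled translates (composition acting on the variable $y'$), the coupling term being $\frac{J}{1-2\delta}(2\widetilde{w}_n-\widetilde{w}_n\circ\tau_n^{+}-\widetilde{w}_n\circ\tau_n^{-})$. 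It then suffices to let $n\to\infty$. By Theorem~\ref{theo:TwoScaleLimitsMartingale}, $\widetilde{w}_n\to\widetilde{w}_\infty$ strongly in $\mathrm{L}^2$, which disposes of the diffusion, reaction and time-derivative contributions as well as the initial datum; for the coupling I would split $\widetilde{w}_n\circ\tau_n^{\pm}-\widetilde{w}_\infty\circ\tau^{\pm}=(\widetilde{w}_n-\widetilde{w}_\infty)\circ\tau_n^{\pm}+(\widetilde{w}_\infty\circ\tau_n^{\pm}-\widetilde{w}_\infty\circ\tau^{\pm})$, the first summand having $\mathrm{L}^2$ norm equal to $\lVert\widetilde{w}_n-\widetilde{w}_\infty\rVert_{\mathrm{L}^2}$ by measure preservation and hence tending to $0$, and the second tending to $0$ by the eventual a.e.\ equality $\tau_n^{\pm}=\tau^{\pm}$ together with measure preservation. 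Passing to the limit produces exactly~\eqref{eq:WeakFormulationShuffleLimit}.

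Finally, the constancy of $\widetilde{w}_\infty(\bm{x},t,\cdot,y')$ on $\rbrack\delta,1-\delta\lbrack$ is inherited from the corresponding constancy of each $u_{0,2^n,j}$ and survives the $\mathrm{L}^2$ and almost-everywhere limit, which justifies the reduced notation $\widetilde{w}_\infty(\bm{x},t,y')$; the Neumann boundary condition and the initial condition $\widetilde{w}_\infty(\cdot,0,\cdot)=\widetilde{w}_\infty^0$ pass to the limit by retaining test functions that do not vanish on $\partial B$ and at $t=0$. The main obstacle I anticipate lies in the second paragraph: correctly identifying the limiting neighbour maps $\tau^{\pm}$ as the radical-inverse transforms of the unit shift and establishing the strong $\mathrm{L}^2$ convergence of the shuffled coupling terms $\widetilde{w}_n\circ\tau_n^{\pm}$. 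Everything else reduces to integration by parts and the strong convergence already furnished by Theorem~\ref{theo:TwoScaleLimitsMartingale}, but making the combinatorics of the bit reversal match the explicit thresholds $1-2^{-j}\leq y'<1-2^{-(j+1)}$ and the displacement $3\cdot2^{-(j+1)}-1$ is the delicate point.
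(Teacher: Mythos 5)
Your proposal is correct, and the combinatorial core --- identifying $\lfloor M_nH_n(y')\rfloor$ as the bit reversal of $\lfloor M_ny'\rfloor$, and computing that incrementing the layer index by one shifts $y'$ by $3\cdot2^{-(r+1)}-1$ where $r$ is the number of leading ones, with wrap-around only on an exceptional set of measure $2^{-n}$ --- coincides with what the paper does in Equations~\eqref{eq:AdjacentPlusBinaryReversal}--\eqref{eq:AdjacentMinusBinaryReversal}. Where you genuinely diverge is the passage from $\widetilde{w}_n$ to $\widetilde{w}_\infty$. You pass to the limit $n\to\infty$ in the weak formulation, integrating all space-time derivatives onto the test function and controlling the coupling terms by the splitting $(\widetilde{w}_n-\widetilde{w}_\infty)\circ\tau_n^{\pm}+(\widetilde{w}_\infty\circ\tau_n^{\pm}-\widetilde{w}_\infty\circ\tau^{\pm})$, using measure preservation and the strong $\mathrm{L}^2$ convergence from Theorem~\ref{theo:TwoScaleLimitsMartingale}; this is sound. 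The paper instead never lets $n\to\infty$ in the functions at this stage: it fixes $n$, tests against $\varphi(\bm{x},t)\mathds{1}\{y'\in\lbrack\beta/2^n,(\beta+1)/2^n\lbrack\}$, and uses the conditional-expectation identity $\widetilde{w}_n=\mathbb{E}(\widetilde{w}_\infty\vert\mathcal{F}_n)$ to replace $\widetilde{w}_n$ by $\widetilde{w}_\infty$ exactly (the shuffled coupling terms are handled because the maps permute dyadic intervals, so their preimages remain $\mathcal{F}_n$-measurable), obtaining the identity for $\widetilde{w}_\infty$ averaged over every dyadic interval with $\beta\in\llbracket1,2^n-2\rrbracket$; it then localizes by dividing by $2^{-n}$ and letting $n\to\infty$, a Lebesgue-differentiation step. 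The paper's route buys an exact identity at each finite $n$ and sidesteps any convergence of $\partial_t\widetilde{w}_n$ or $\nabla_{\mathrm{T}}\widetilde{w}_n$ (conditional expectation in $y'$ commutes with derivatives in $(\bm{x},t)$), at the cost of a final localization argument; your route delivers the weak formulation directly against arbitrary smooth test functions in $(\bm{x},t,y')$, at the cost of having to integrate by parts fully and to justify the $\mathrm{L}^2$ convergence of the shuffled compositions, which you do correctly via measure preservation.
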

\begin{proof}
Let
$n$ in $\mathbb{N}$ and ${\beta}$ belongs to 
$\llbracket0,2^n-1\rrbracket$. Let the $w_n$ be defined as
  in~\S\ref{subsect:RearrangementContinuous},
and the $\widetilde{w}_n$
be defined from the $2^n$-two scale limits $u_{0,2^n}$ as in 
Theorem~\ref{theo:TwoScaleLimitsMartingale}. Both the
$\widetilde{w}_n$ and the $w_n$ are
functions defined over 
$(B\cart\rbrack0,1\lbrack)\cart\mathbb{R}^+\cart\rbrack0,1\lbrack\cart\rbrack0,1\lbrack$.
For all $(\bm{x},t)$ in $(B\cart\rbrack0,1\lbrack)\cart\mathbb{R}^+$, and $y'$ in
$\rbrack0,1\lbrack$, the application $y\mapsto \widetilde{w}_n(\bm{x},t,y,y')$ is
constant on $\rbrack\delta,1-\delta\lbrack$. As an abuse of notations,
let's also denote by  $\widetilde{w}_n(\bm{x},t,y')$ the value of 
$\widetilde{w}_n(\bm{x},t,y',y)$ when $y$ belongs to
$\rbrack\delta,1-\delta\lbrack$.
We use the same abuse of notations for the $w_n$.
Consider a test function $\varphi$ belonging to 
$\mathcal{C}^\infty(\overline{\Omega}\cart\mathbb{R}^+)$.  Set
 $\psi(\bm{x},t,y)=\varphi(x,t)\mathds{1}\{y\in\lbrack\frac{{\beta}}{2^n},\frac{{\beta}+1}{2^n}\lbrack\}$.
Let $Q_T=\Omega\cart\mathbb{R}^+$.
Then, since the $u_{0,2^n,j}$ satisfy the weak formulation
of~\eqref{subeq:MultilayerTwoScaleLimit}, we have:
\begin{equation*}
\begin{split}
\iint_{Q_T}\int_{\lbrack\frac{\beta}{2^n},\frac{\beta+1}{2^n}\lbrack}
\frac{\partial w_n}{\partial t}(\bm{x},t,y')\cdot\varphi(\bm{x},t)\ud
y'\ud\bm{x}\ud t
&\\
+A\iint_{Q_T}
\int_{\lbrack\frac{\beta}{2^n},\frac{\beta+1}{2^n}\lbrack}
\nabla_{\mathrm{T}} w_n(\bm{x},t,y')\cdot\nabla_{\mathrm{T}}\varphi(\bm{x},t)\ud y'\ud\bm{x}\ud t
&\\
+\frac{2K}{1-2\delta}
\iint_{Q_T}\int_{\lbrack\frac{\beta}{2^n},\frac{\beta+1}{2^n}\lbrack}
w_n(\bm{x},t,y')\cdot\varphi(\bm{x},t)\ud y'\ud\bm{x}\ud t
&\\
+\frac{J}{1-2\delta}\int_{\lbrack\frac{\beta}{2^n},\frac{\beta+1}{2^n}\lbrack}
2w_n(\bm{x},t,y')
\cdot\varphi(\bm{x},t)\ud y'\ud\bm{x}\ud t
&\\
-\frac{J}{1-2\delta}\int_{\lbrack\frac{\beta}{2^n},\frac{\beta+1}{2^n}\lbrack}
(w_n(\bm{x},t,y'+2^{-n})+w_n(\bm{x},t,y'-2^{-n}))
\cdot\varphi(\bm{x},t)\ud y'\ud\bm{x}\ud t&=0,
\end{split}
\end{equation*}
where, to simplify notations, we consider the function $w_n$ to be $1$-periodic in $y'$.
Therefore, for all $\beta$ in $\llbracket 0, 2^n-1\rrbracket$,
\begin{equation}\label{eq:formulationwidetilden}
\begin{split}
\iint_{Q_T}\int_{\lbrack\frac{\beta}{2^n},\frac{\beta+1}{2^n}\lbrack}
\frac{\partial \widetilde{w}_n}{\partial
  t}(\bm{x},t,y')\cdot\varphi(\bm{x},t)\ud y'\ud\bm{x}\ud t
&\\
+A\iint_{Q_T}
\int_{\lbrack\frac{\beta}{2^n},\frac{\beta+1}{2^n}\lbrack}
\nabla_{\mathrm{T}} \widetilde{w}_n(\bm{x},t,y')\cdot\nabla_{\mathrm{T}}\varphi(\bm{x},t)\ud y'\ud\bm{x}\ud t
&\\
+\frac{2K}{1-2\delta}
\iint_{Q_T}\int_{\lbrack\frac{\beta}{2^n},\frac{\beta+1}{2^n}\lbrack}
\widetilde{w}_n(\bm{x},t,y')\cdot\varphi(\bm{x},t)\ud y'\ud\bm{x}\ud t
&\\
+\frac{J}{1-2\delta}\iint_{Q_T}\int_{\lbrack\frac{\beta}{2^n},\frac{\beta+1}{2^n}\lbrack}
2\widetilde{w}_n(\bm{x},t,y') \cdot\varphi(\bm{x},t)\ud y'\ud\bm{x}\ud t
&\\
-\frac{J}{1-2\delta}\iint_{Q_T}\int_{\lbrack\frac{\beta}{2^n},\frac{\beta+1}{2^n}\lbrack}
\widetilde{w}_n(\bm{x},t,{H_n^*}^{-1}(H^*_n(y')+2^{-n}))
\cdot\varphi(\bm{x},t)\ud y'\ud\bm{x}\ud t
&\\
-\frac{J}{1-2\delta}\iint_{Q_T}\int_{\lbrack\frac{\beta}{2^n},\frac{\beta+1}{2^n}\lbrack}
\widetilde{w}_n(\bm{x},t,{H_n^*}^{-1}(H^*_n(y')-2^{-n}))
\cdot\varphi(\bm{x},t)\ud y'\ud\bm{x}\ud t.
&=0
\end{split}
\end{equation}
Here $H^*_n$ is simply the bit reversal of the first $n$
coefficients in the binary expansion. Thus:
\begin{equation}\label{eq:AdjacentPlusBinaryReversal} 
{H_n^*}^{-1}(H^*_n(y'+2^{-n}))=
\begin{cases} 
y'+3\cdot2^{-(j+1)}-1&\text{if $1-2^{-j}\leq
  y'<1-2^{-(j+1)}$}, \\&\text{for $0\leq j\leq n-1$},\\ 
y'-1+2^{-n}&\text{if $1-2^{-n}\leq y'<1$}.
\end{cases}
\end{equation} 
And
\begin{equation}\label{eq:AdjacentMinusBinaryReversal}
{H_n^*}^{-1}(H^*_n(y')-2^{-n})=
\begin{cases}
y'-3\cdot2^{-(j+1)}+1&\text{if $2^{-(j+1)}\leq
  y'<2^{-j}$},\\&\text{for  $0\leq j\leq n-1$},\\ 
y'+1-2^{-n}&\text{if $0\leq y'< 2^{-n}$}.
\end{cases}
\end{equation}
Since $\varphi(\bm{x},t)\mathds{1}\{y'\in\lbrack\frac{\beta}{2^n},\frac{\beta+1}{2^n}\lbrack\}$ is $\mathcal{F}_n$-measurable and 
$\widetilde{w}_n=\mathbb{E}(\widetilde{w}_\infty\vert\mathcal{F}_n)$,  Equality~\eqref{eq:formulationwidetilden}
remains valid after replacing $\widetilde{w}_n$ by
$\widetilde{w}_\infty$.
Therefore,
\begin{equation*}
\begin{split}
\iint_{Q_T}\int_{\lbrack\frac{\beta}{2^n},\frac{\beta+1}{2^n}\lbrack}
\frac{\partial \widetilde{w}_\infty}{\partial
  t}(\bm{x},t,y')\cdot\varphi(\bm{x},t)\ud y'\ud\bm{x}\ud t
&\\
+A\iint_{Q_T}
\int_{\lbrack\frac{\beta}{2^n},\frac{\beta+1}{2^n}\lbrack}
\nabla_{\mathrm{T}} \widetilde{w}_\infty(\bm{x},t,y')\cdot\nabla_{\mathrm{T}}\varphi(\bm{x},t)\ud y'\ud\bm{x}\ud t
&\\
+\frac{2K}{1-2\delta}
\iint_{Q_T}\int_{\lbrack\frac{\beta}{2^n},\frac{\beta+1}{2^n}\lbrack}
\widetilde{w}_\infty(\bm{x},t,y')\cdot\varphi(\bm{x},t)\ud y'\ud\bm{x}\ud t
&\\
+\frac{J}{1-2\delta}\int_{\lbrack\frac{\beta}{2^n},\frac{\beta+1}{2^n}\lbrack}
2\widetilde{w}_\infty(\bm{x},t,y') \cdot\varphi(\bm{x},t)\ud y'\ud\bm{x}\ud t
&\\
-\frac{J}{1-2\delta}\int_{\lbrack\frac{\beta}{2^n},\frac{\beta+1}{2^n}\lbrack}(
\widetilde{w}_\infty(\bm{x},t,\tau^+(y'))-\widetilde{w}_\infty(\bm{x},t,\tau^-(y')))
\cdot\varphi(\bm{x},t)\ud y'\ud\bm{x}\ud t
&=0
\end{split}
\end{equation*}
for all $n$ in $\mathbb{N}$ and $\beta$ in
$\llbracket1,2^n-2\rrbracket$. Choose $y'$ in $\rbrack0,1\lbrack$, for any positive
integer $n$, set $\beta=\lfloor 2^ny'\rfloor$ and take the limit in
the above equality divided by $2^{-n}$ as $n$ tends to $+\infty$. 
\end{proof}

If instead of setting $M_n=2^n$, we consider a general sequence $(M_n)_{n\in\mathbb{N}}$, the same
reasonning holds. When $M_n$ is $2^n$, the shuffling of layers is the
bit reversal of the first $n$ coefficients of the binary representation of
$y$, thus involutive. This is not the case for general $M_n$ and we
must use Remark~\ref{remark:explicitHnetoile}. Therefore, utmost care
must be taken to compute the analogues
of~\eqref{eq:AdjacentPlusBinaryReversal}
and~\eqref{eq:AdjacentMinusBinaryReversal}. We provide the limit
in the general case without proof. In that case,
we have
\begin{align*}
{H_n^*}^{-1}(H^*_n(y')+\frac{1}{M_n})&=
\begin{cases} 
y'-{\sum_{l=1}^j}\frac{1}{M_l}+\frac{1}{M_{j+1}}&\text{if $1-\frac{1}{M_j}\leq
  y'<1-\frac{1}{M_{j+1}}$,}\\&\text{for $0\leq j\leq n-1$},\\ 
y'-1+\frac{1}{M_n}&\text{if $1-\frac{1}{M_n}\leq y'<1$}.
\end{cases}\\
{H_n^*}^{-1}(H^*_n(y')-\frac{1}{M_n})&=
\begin{cases} 
y'+{\sum_{l=1}^j}\frac{1}{M_l}-\frac{1}{M_{j+1}}&\text{if $\frac{1}{M_j}\leq 
y'<\frac{1}{M_{j+1}}$}, \\
&\text{for $0\leq j\leq n-1$},\\  
y'+1-\frac{1}{M_n}&\text{if $0\leq y'<\frac{1}{M_n}$}.
\end{cases}
\end{align*}
and the limit equation~\eqref{eq:WeakFormulationShuffleLimit} remains
valid if we set instead
\begin{subequations}
\begin{align}
\tau^+(y')&=y'-\sum_{l=1}^j\frac{1}{M_l}+\frac{1}{M_{j+1}}\text{ when $1-\frac{1}{M_j}\leq
  y'<1-\frac{1}{M_{j+1}}$},\\
\tau^-(y')&=y'+\sum_{l=1}^j\frac{1}{M_l}-\frac{1}{M_{j+1}}\text{ when $\frac{1}{M_{j+1}}\leq
y'<\frac{1}{M_{j}}$},
\end{align}
\end{subequations}
 for all non-negative integer $j$.

\section{Conclusion}
We have proven in this paper, see Theorem~\ref{theo:TwoScaleLimitsMartingale}, that the two-scale limits of a given
sequence of functions, computed for periods that are entire multiple of the
previous ones, form a bounded martingale and thus converge both
strongly in $\mathrm{L}^2$ and almost
everywhere. From the limit, called the Two-Scale Shuffle limit,
one can recover any element in the
sequence of two-scale limits: this limit contains all the information contained in the whole
sequence of two-scale limits, see
Corollary~\ref{corrol:RecoveryTwoScale}. For a 
good choice of increasing periods,
this limits captures everything that happens at any length scale that
is an entire multiple of $\varepsilon$. 

Unfortunately, this limit  does not
capture all phenomena with a period linear in $\varepsilon$:
it cannot capture phenomena with an irrational scale factor. The
construction of the martingale depends on the
assumption that $p_{n+1}$  is always an entire multiple of $p_n$. If there are
two interesting scales whose ratio is irrational then no choice of
periodic scale carry the information for both scales.  

We applied the notion of the Two-Scale Shuffle limit to the heat
equation on multilayers with transmission conditions between adjacent
layers. We then considered the solutions to these equations and 
established the equation satisfied by
their Two-Scale Shuffle limit.

To establish the convergence of the two-scale limit, we 
 used the shuffle of hypercubes described in
\S\ref{subsect:RearrangementContinuous}. Unfortunately, because of
this shuffle, it is not easy to reach an intuitive understanding of
the Two-Scale Shuffle limit. Results on the existence of the limit in the
setting of \S\ref{subsect:RearrangementSequence}  would not have that
drawback.
 Solving Problem~\ref{problem:ImbricatedPeriodicConvergence}
would be a first step to obtain a limit in this setting.

\bibliographystyle{plain}
\bibliography{maths,physic,ajoutmaths,ajoutphysic}
\end{document}